%----------------------RESEARCH PROPOSAL----------------------------------------------------------------
\documentclass[11pt]{article}
%-----------------------Packages------------------------------------
\usepackage{xcolor}
\usepackage{etoolbox}
%\apptocmd{\thebibliography}{\csname phantomsection\endcsname\addcontentsline{toc}{chapter}{\bibname}}{}{}
\usepackage{thmtools}
\usepackage{amsmath,amsfonts,amssymb,amsthm}
\usepackage{ucs}
\usepackage{mathptmx}
% Margins

\setlength\topmargin{0pt} \addtolength\topmargin{-\headheight}
\addtolength\topmargin{-\headsep} \setlength\textheight{8.9in}
\setlength\oddsidemargin{0pt} \setlength\evensidemargin{0pt}
\setlength\marginparwidth{0pt} \setlength\textwidth{6.5in}
\setlength\belowdisplayshortskip{\belowdisplayskip}

\AtBeginDocument{
	\addtolength{\abovedisplayskip}{-2ex}
	\addtolength{\abovedisplayshortskip}{-2ex}
	\addtolength{\belowdisplayskip}{-2ex}
	\addtolength{\belowdisplayshortskip}{-2ex}
}
\usepackage[toc,page]{appendix}
\usepackage{setspace}
\usepackage{graphicx}
\graphicspath{ {./images/} }
\usepackage{wrapfig}
\usepackage[usenames,dvipsnames]{pstricks}
\usepackage{fancyhdr}

\usepackage[caption=false]{subfig}
\usepackage{lipsum} % just for the example
%\usepackage[a4paper, total={6in, 8in}]{geometry}
%\usepackage{dirtytalk}
%\usepackage{setspace}
%\onehalfspacing % or \doublespacing
%
%-------Mathematics---------------------------------------------------
%\usepackage{amsthm}
%_______________________________________________________________________
\usepackage{amsmath,amssymb,amsfonts}
%=============================================================================================================
%\newtheoremstyle{break}% name
%{}%         Space above, empty = `usual value'
%{}%         Space below
%{\itshape}% Body font
%{}%         Indent amount (empty = no indent, \parindent = para indent)
%{\bfseries}% Thm head font
%{.}%        Punctuation after thm head
%{\newline}% Space after thm head: \newline = linebreak
%{}%         Thm head spec

\theoremstyle{break}
\newtheorem{prop}{\textbf{Proposition}}[section]
%\newtheoremstyle{break}
%{\topsep}{\topsep}%
%{\itshape}{}%
%{\bfseries}{}%
%{\newline}{}%
%\theoremstyle{break}
\newtheorem*{theorem*}{Theorem}
\newtheorem{definition}{\textbf{Definition}}[section]
\newtheorem{theorem}{\textbf{Theorem}}[section]

\newtheorem{claim}{\underline{\textbf{Claim}}}[section]

%\renewenvironment{proof}{{\bfseries Proof}}{}
%\theoremstyle{definition}

%\theoremstyle{remark}
%\newtheorem*{remark}{\gr\textbf{? ?±??±?????·??·}} % gia mi-arithmimenes paratiriseis

%\newtheorem{claimproof}{\gr ????????????· ?????????????}[claim]
 % Mauro kouti meta apo apodei3i

%\newcommand\myeq{\mathrel{\overset{\makebox[0pt]{\mbox{\normalfont\tiny\sffamily \gr ????????·??± ??????????? ? ?±??±?³??³???}}}{=}}}
%----------------------------------------------------------------------
%----------------------------------------------------------------------
% -----------------------New commands----------------------------------
%\newcommand{\en}{\selectlanguage{english}}
%\newcommand{\gr}{\selectlanguage{greek}}
\newcommand{\R}{\mathbb{R}}

\newcommand{\N}{\mathbb{N}}
\newcommand{\Z}{\mathbb{Z}}

%Telestis Diaforisis D^a
%apolyti timi tou D^a(u)
%apolyti timi opoioudipote orismatos
%|a|<=k

%\newcommand{\norm}[1]{\left\lVert#1\right\rVert} % symvolo normas

%\DeclareMathOperator*{\sup}{sup}
\newcommand{\norm}[1]{\left\lVert#1\right\rVert}

%\usepackage{graphicx,tipa}% http://ctan.org/pkg/{graphicx,tipa}
%\newcommand{\arc}[1]{{%
		%		\setbox9=\hbox{#1}%
		%		\ooalign{\resizebox{\wd9}{\height}{\texttoptiebar{\phantom{A}}}\cr#1}}}
%---------------------------------------------------------
%\setlength{\marginparwidth}{50pt}
%\setlength{\marginparsep}{10pt}
%************************************************************************************************************
%************************************************************************************************************

%************************************************************************************************************
%************************************************************************************************************

\begin{document}
	\title{Sufficent Conditions for the preservation of Polygonal-Connectedness in an arbitrary normed space}
	\author{Savvas Andronicou and Emmanouil Milakis}
%	\author{Savvas Andronicou\footnote{University of Cyprus, Department of Mathematics \& Statistics, P.O. Box 20537
%		Nicosia, 1678 CYPRUS, \newline andronikou.savvas@ucy.ac.cy} \ and Emmanouil Milakis\footnote{University of Cyprus, Department of Mathematics \& Statistics, P.O. Box 20537
%		Nicosia, 1678 CYPRUS,\ emilakis@ucy.ac.cy} }	
			\date{}

	\maketitle
	\begin{abstract}
		In this article we prove that if $ (X,\norm{\cdot}_X) $ is a normed space and $ U $ is a polygonally-connected subset of $ X $ with $M:=\{S_i:\ i\in I\}\subset \mathcal{P}\left( U\right)  $, a non-empty arbitrary family of discrete, non-empty  subsets of $ U, $ then the property of polygonal-connectedness is also preserved in the resulting set $ U\setminus\left( \bigcup_{i\in I} S_i\right) ,$ under appropriate conditions.
	\end{abstract}
	
		\textbf{Mathematics Subject Classification:} 54D05, 30L99
	
	\textbf{Keywords:} Polygonal-connectedness, arbitrary normed spaces	
	
	\section{Introduction}
	
	The property of polygonal-connectedness appears in many settings from pure applications to abstract theories. For instance in a series of papers by Klee (see \cite{Klee1}, \cite{Klee2}) it played a critical role when one tries to characterise convex sets in different topologies. More recently, in a very interesting result \cite{B}, it was proven that the set of all framelets is polygonally connected  both in $L^2(\R)$ and in multiplicative $L^2(\R,dx/|x|)$ norm, with a generalisation in higher dimensions, answering a fundamental question in the theory of wavelets. Finally one could mention recent results in image processing (see \cite{NPK} and references therein) where  digitization can be defined as a process of transforming a given set in Euclidean space $\R^n$  into a discrete set by considering the Gauss digitization i.e its intersection with $\Z^n$. As it is apparent in \cite{NPK} , due to the approximations induced by the digitization process, the polygonal connectedness of the initial set is not always preserved. 
		
	The main objective of the present article is to investigate certain properties of polygonally connected sets in arbitrary normed spaces. It can be considered as a continuation of paper \cite{AM23} where the authors examined the question of preservation of path-connectedness, in particular, the finding of natural conditions on sets with holes in an arbitrary metric spaces in order to preserve the property of path-connectedness. A similar question stands in an arbitrary normed space for polygonally connected sets. 
	
		 \textit{Question: Assume that $ (X,\norm{\cdot}_X) $ is an arbitrary normed space and $ U $ is a polygonally-connected subset of $ X $. If one removes a family $ M $ of discrete, non-empty  subsets from $U$, is it true that resulting set $ U\setminus \bigcup M $ is also polygonally-connected?}

	The purpose of the present paper is provide the structural condition on sets with holes that allows the property of polygonal-connectedness to remain valid, providing in that way a positive answer to the above mentioned question. Our approach is pure analytic and deals with a delicate construction of polygonal lines which preserve the connectedness of the structure.  To the best of our knowledge, these constructions are absent from the literature. Note that although our intention is to provide an answer to a pure theoretical question, it seems that one will end up constructing similar polygonal lines when treating problems from numerical analysis and computer science, for instance when constructing Algorithms for finding convex hulls of simple polygons in general settings (see for instance \cite{GH}, \cite{McA}, \cite{SVW}). To this perspective, and in addition to the applications in image processing \cite{NPK} mentioned above, we anticipate that our results will also gain attention among scientists working in these areas as well. For other applications in Analysis of PDEs in nodal sets the reader is referred to the introduction of \cite{AM23}.     
	
	The structure of the paper is as follows. Firstly we give the list of notations to be used and in Section 2 we provide the appropriate definitions and present the main results. Section 3 is devoted to the proof of the main theorem.
	
	%*******************************************************************************************************
		\section*{Notations}
	Next we will provide a list of symbols and notions to be used in the sequel. Let us consider $ (X,d) $ to be a metric space with  $ A,B\subset X $ and $ x_0\in X. $ We denote \\
	$ B_d\left(x_0,\delta \right):=\{z\in X :\ d(z,x_0)<\delta\}  $ the usual open ball with center $ x\in X $ and radius $ \delta>0 $ \\
	$ int(A):=\{x\in X:\ \exists\delta>0,\ B_d(x,\delta)\subset A\} $, the internal of the set $ A $\\
	$ ext(A):=int(X\setminus A) $, the external of the set $ A $\\
	$ Cl(A)\equiv\bar{A}:=\{x\in X:\ x\ \text{limit point of the set A}\} $, the closure of the set $ A $\\
	$ \partial A:=\bar{A}\cap Cl\left( X\setminus A\right) $, the boundary of the set $ A $\\
	$ iso(A):=\{x\in X:\ \exists \delta_x>0,\ B_d(x,\delta_x)\cap A=\{x\}\} $, the set of isolated points of the set $ A $\\
	\begin{gather}
		dist(x_0,A):=\begin{cases}
			\inf\left\lbrace d(x_0,y):\ y\in A \right\rbrace, & A\neq\emptyset\nonumber\\
			0, & A=\emptyset \nonumber
			\end{cases},\ 
	   dist(A,B):=\begin{cases}
		\inf\left\lbrace d(x,y):\ x\in A,\ x\in B \right\rbrace, & A\neq\emptyset\nonumber\\
		0, & A=\emptyset,\ \text{or}\ B=\emptyset \nonumber
	\end{cases}\\
	\end{gather}
where the quantity $ dist(x_0,A)$ is the distance of point $ x_0 $ from the set  $ A $ and $ dist(A,B) $  is the distance from the set $ A $ to the set $ B. $ \\
	For a continuous function $ \gamma:I\to X $, where $ I $ is an interval in $ \R $,  we use the following symbolism:
	\begin{itemize}
		\item $ \gamma(I):=\{\gamma(t)| \ t\in I\} $ the trace of the curve $ \gamma $ on the interval $ I. $
		\item For $ x,y\in \gamma(I) $, let $ x=\gamma(t_x), y=\gamma(t_y) $ and without loss of generality, let $ t_x<t_y $, then we set: 
		$ \gamma_{xy}:=\{\gamma(t)|\ t\in[t_x,t_y]\} $, i.e the part of curve $ \gamma $ that connects the points $ x $ and $ y. $
	\end{itemize}
	\section{Definitions and Main Result}
	%\subsubsection{Definitions}
	In this section we give some necessary definitions for connected metric spaces and polygonally-connected subsets of normed spaces.
	\begin{definition}[Connected metric space]
		A metric space $ \left( X,d\right)  $ is called \emph{connected} if there is no open partition i.e there is no pair of sets $ \{A,B\} $ such that the following hold:
		\begin{gather}
			(i)\ X=A\cup B,\ (ii) \ A,B\neq\emptyset,\ (iii)\  A\cap B=\emptyset,\ (iv)\ A, B \ \text{are both open}.
		\end{gather}
	\end{definition}
	\begin{definition}[Polygonally-connected set]
		Let $ (X,\norm{\cdot}_X)  $ be a normed space and $ U\subset X. $
		\begin{itemize}
			\item If $ x,y\in X $ then the set $ [x,y]:=\{(1-t)x+ty: t\in[0,1]\} $ is called line segment with endpoints $ x,y. $
			\item If $ (x_j)_{j=1}^{n} $ is a finite sequence of points of $ X, $ (not necessary distinct), then the set $ \bigcup_{j=2}^{n}[x_{j-1},x_j]$ is called polygonal line with initial point $ x_1 $ and final point $ x_n. $ The line seqments $ [x_1,x_2],\ [x_2,x_3],\ \dots,\ [x_{n-1},x_n] $ are called edges of the polygonal line.
			\item For any points $ x,y$ of the space $ X $, with $ x\neq y, $ we say that are connected through a polygonal line, if there exists a polygonal line with initial point $ x $ and final point $ y. $
			\item A set $ U\subset X $ is called polygonally connected, if for every $ x,y\in U, $ there exists a polygonal line $ \bigcup_{j=2}^{n}[x_{j-1},x_j] $ that connects the points $ x,y $ and $ \bigcup_{j=2}^{n}[x_{j-1},x_j]\subset U. $
		\end{itemize}	
		
	\end{definition}
	
	%--------------------------------------------Main Results-----------------------------------------------
	%\subsubsection{Main Result}
	\begin{definition}[U-M property]
		Let $ (X,\norm{\cdot}_X) $ be a normed space and $ U $ a polygonally-connected subset of $ X $ with $M:=\{S_i:\ i\in I\}\subset\mathcal{P}\left( U\right)  $, an arbitraty family of discrete, non-empty subsets of $ U. $ We call that the set $ U $ satisfies the (U-M) property if and only if, for every polygonal line, $ \bigcup_{j=2}^{n}[x_{j-1},x_j] $ the following statement holds:
		\begin{gather}
			\text{For all}\ j\in\{2,\dots,n\},\ \text{the set}\ [x_{j-1},x_j]\cap \left(\bigcup_{i\in I} \partial S_i \right)\ \text{is finite}.
		\end{gather}
	
\end{definition}

Next we are ready to present our main result.

	\begin{theorem}\label{main_theorem_1}
		Let $ (X,\norm{\cdot}_X) $ be a normed space and $ U $ a polygonally-connected subset of $ X $ with $M:=\{S_i:\ i\in I\}\subset\mathcal{P}\left( U\right) $, an non-empty, arbitrary family of discrete\footnote{By discrete family of sets $ M:=\{S_i:\ i\in I\} $, we mean that, the map $ P:I\to M $ is injective i.e for every $ i,j\in I $, if $ i\neq j $ then $ S_i=P(i)\neq P(j)=S_j. $}, non-empty subsets of $ U. $ We assume that:
		\begin{itemize}
			\item[(i)] $ U $ satisfies the $ (U-M) $ property.
			\item [(ii)] For all $ i\in I,\ S_i $ is open, $ \partial S_i\neq\emptyset $ and $ \bar{S}_i\varsubsetneqq int(U) $, with $ \bar{S}_i\cap\bar{S}_j=\emptyset,\ \forall i,j\in I,\ i\neq j $.
			\item[(iii)] For all $ i\in I, $ there exists $ \delta_i>0 $ such that 
			\begin{gather}
				K_{\delta_i}:=\left\lbrace x\in U\setminus\left( \bigcup_{i\in I} S_i\right) :\ dist(x,\ \partial S_i)<\delta_i \right\rbrace 
			\end{gather}
		is polygonally-connected. 
		\end{itemize}
	Then, the resulting set $ U\setminus\left(\bigcup_{i=1}^n S_i\right)  $ is polygonally-connected.
	\end{theorem}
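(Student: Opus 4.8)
The plan is to reduce everything to a surgery on a single polygonal line. Fix $a,b\in U':=U\setminus\left(\bigcup_{i\in I}S_i\right)$, the set appearing in the conclusion; since $U$ is polygonally-connected there is a polygonal line $P=\bigcup_{j=2}^{n}[x_{j-1},x_j]\subseteq U$ with $x_1=a$, $x_n=b$, and we fix the obvious parametrization $\gamma\colon[0,n-1]\to X$ of $P$ that is affine on each $[j-1,j]$. The idea is to locate the sub-arcs of $P$ that dip into the holes $S_i$, verify that each such sub-arc enters and leaves through $\partial S_i$, and splice in a detour through the collar $K_{\delta_i}$ of hypothesis (iii), which is polygonally-connected and contained in $U'$.

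First I would establish the finiteness statements that make hypothesis (i) do its work. On a fixed segment $[x_{j-1},x_j]$ the $(U$-$M)$ property says $[x_{j-1},x_j]\cap\bigcup_{i\in I}\partial S_i$ is finite; deleting this finite set leaves finitely many open sub-segments, and each such sub-segment $J$, being connected and disjoint from every $\partial S_i$, lies either in $S_i$ or in $X\setminus\bar{S}_i$ for each $i$ (here one uses that $S_i$ is open, so $X=S_i\sqcup\partial S_i\sqcup ext(S_i)$ with $ext(S_i)=X\setminus\bar{S}_i$), and by the pairwise disjointness of the closures $\bar{S}_i$ the sub-segment $J$ can lie in at most one $S_i$. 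Hence $P$ meets only finitely many of the $S_i$, so $I_0:=\{i\in I:\ P\cap S_i\neq\emptyset\}$ is finite. Likewise, for a fixed $i$, the restriction of $\gamma^{-1}(\bar{S}_i)$ to each $[j-1,j]$ is a finite union of closed intervals — its relatively open part $\gamma^{-1}(S_i)$ is a union of intervals whose interior endpoints are forced onto $\partial S_i$, hence finite — so $\gamma^{-1}(\bar{S}_i)$ has finitely many connected components.

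Next I would separate the good and bad parts of $\gamma$. The set $\gamma^{-1}\!\left(\bigcup_{i\in I_0}\bar{S}_i\right)$ is a finite union of the closed intervals above, hence has finitely many components $[\alpha_1,\beta_1],\dots,[\alpha_M,\beta_M]$ with $\beta_p<\alpha_{p+1}$; since the $\bar{S}_i$ are disjoint closed (hence separated) sets, each $[\alpha_p,\beta_p]$ lies in a single $\gamma^{-1}(\bar{S}_{i_p})$ with $i_p\in I_0$. The crucial point is that $\gamma$ escapes to the boundary at the ends of a bad interval: if $\alpha_p>0$ then $\gamma(t)\notin\bar{S}_{i_p}$ for $t$ slightly below $\alpha_p$, whence $\gamma(\alpha_p)\in\bar{S}_{i_p}\cap\overline{X\setminus\bar{S}_{i_p}}=\partial(\bar{S}_{i_p})\subseteq\partial S_{i_p}$ (using $S_{i_p}\subseteq int(\bar{S}_{i_p})$), while if $\alpha_p=0$ then $\gamma(\alpha_p)=a\in\bar{S}_{i_p}\setminus S_{i_p}=\partial S_{i_p}$ because $a\in U'$; the same applies at $\beta_p$. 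Since $\partial S_{i_p}\subseteq\bar{S}_{i_p}\subseteq int(U)\subseteq U$ and $\partial S_{i_p}$ meets no $S_{i'}$, one checks $\partial S_{i_p}\subseteq K_{\delta_{i_p}}$, so in particular $\gamma(\alpha_p),\gamma(\beta_p)\in K_{\delta_{i_p}}$. On each complementary ``good'' interval $[\beta_{p-1},\alpha_p]$ (with $\beta_0:=0$, $\alpha_{M+1}:=n-1$) the value $\gamma(t)$ avoids $\bigcup_{i\in I_0}\bar{S}_i$ on the interior, its endpoints lie on some $\partial S_i$ or equal $a$ or $b$, and it avoids $\bigcup_{i\notin I_0}S_i$ by the definition of $I_0$; hence $\gamma$ restricted to a good interval is a polygonal line lying in $U'$.

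Finally I would assemble the answer: on each bad interval, if $\gamma(\alpha_p)=\gamma(\beta_p)$ discard it, and otherwise invoke hypothesis (iii) to obtain a polygonal line $Q_p\subseteq K_{\delta_{i_p}}\subseteq U'$ from $\gamma(\alpha_p)$ to $\gamma(\beta_p)$; concatenating the good restrictions with the detours $Q_p$ in order (consecutive pieces share an endpoint) yields a polygonal line from $a$ to $b$ inside $U'$, proving the theorem. No special handling is needed for original vertices $x_j$ that happen to lie in a hole — they fall in the interior of some bad interval and are discarded with it — nor for the degenerate case $I_0=\emptyset$, where $P$ itself works. I expect the main obstacle to be precisely the content of the second and third paragraphs: extracting from the $(U$-$M)$ property that $P$ meets only finitely many holes, in finitely many sub-arcs, and proving the boundary-escape fact $\gamma(\alpha_p),\gamma(\beta_p)\in\partial S_{i_p}$, which is what licenses the use of the collars $K_{\delta_i}$; the concatenation itself is routine bookkeeping.
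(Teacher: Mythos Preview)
Your argument is correct and follows the same surgery idea as the paper --- locate where the polygonal line enters the holes, use the boundary-crossing principle (the paper's Proposition~3.1) to pin down entry and exit points on $\partial S_i$, and reroute through the collars $K_{\delta_i}$ --- but your decomposition is genuinely different and considerably more efficient. The paper works segment-by-segment: it introduces an index set $J$ of edges that meet both some $S_i$ and its boundary, shows each edge meets only finitely many holes (your $I_0$-finiteness, but localized), and then builds a rather intricate recursive scheme of indices $i_{(v,m)}$ and parameters $t^{(m)}_{i_{(v,m)}}$ to list the successive entry/exit points along each offending edge, followed by a lengthy case analysis on whether the edge's endpoints themselves lie in a hole. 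You bypass all of this by parametrizing the entire polygonal line at once, taking the preimage of the \emph{closures} $\bigcup_{i\in I_0}\bar S_i$, and reading off its finitely many connected components $[\alpha_p,\beta_p]$ directly; the fact that each component lands in a single $\bar S_{i_p}$ (from pairwise disjointness of the closures) and that the endpoints are forced onto $\partial S_{i_p}$ (either by the limit-from-outside argument or, at $t=0,n-1$, because $a,b\in U'$) replaces the paper's recursive bookkeeping in one stroke. What the paper's approach buys is an explicit, vertex-level description of the final polygonal line; what yours buys is a proof that fits on a page and makes the role of each hypothesis transparent. One small point worth making explicit in a write-up: when you argue that $\gamma^{-1}(\bar S_i)\cap[j-1,j]$ has finitely many components, the cleanest justification is that the complementary open set has interior endpoints mapping into $\partial(\bar S_i)\subseteq\partial S_i$, hence finitely many by the $(U$--$M)$ property --- you say this, but it is the place a referee would poke.
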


	\section{Proof of Main Result}
	The proof of the Theorem \ref{main_theorem_1} is implemented through the following fundamental to our analysis proposition:
	
	\begin{prop}\label{main_prop}
		Let $ (X,d) $ be an arbitrary metric space, $ K\subset X,$ with $ \partial K\neq\emptyset $ and $\ x_0\in int(K), $ $ y_0\in ext(K) $. If $ \gamma:I\to X $ a continuous function defined on the interval $ I\subset\R $ such that, $ x_0=\gamma(t_{x_0}),y_0=\gamma(t_{y_0}), $ where $ t_{x_0},t_{y_0}\in I, $ then
		\begin{gather}\label{eq_1}
			\gamma(\tilde{I}) \cap\partial K\neq \emptyset			
		\end{gather}
		where $ \tilde{I}:=\begin{cases}
			[t_{x_0},t_{y_0}],\ &t_{x_0}<t_{y_0}\nonumber\\
			[t_{y_0},t_{x_0}],\ &t_{y_0}<t_{x_0}\nonumber.
		\end{cases} $
	\end{prop}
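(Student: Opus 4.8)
The plan is to argue by contradiction: suppose $\gamma(\tilde I)\cap\partial K=\emptyset$, and derive a contradiction with the connectedness of the interval $\tilde I$ (equivalently, of the connected subspace $\gamma(\tilde I)$). The underlying idea is the standard topological fact that a connected set meeting both the interior and the exterior of a set must meet its boundary; the only subtlety here is that we are working in an arbitrary metric space and with a curve parametrized on $\tilde I$, so I want to phrase everything in terms of the relatively open sets cut out on $\tilde I$ (or on $\gamma(\tilde I)$) by $int(K)$ and $ext(K)$.

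First I would recall that $X = int(K)\,\cup\,\partial K\,\cup\,ext(K)$ is a disjoint union (this is immediate from the definitions $int(K)$, $ext(K)=int(X\setminus K)$, $\partial K=\bar K\cap Cl(X\setminus K)$, together with the fact that $int(K)$ and $int(X\setminus K)$ are disjoint open sets each avoiding $\partial K$). Next, set
\begin{gather}
	A:=\gamma^{-1}\bigl(int(K)\bigr)\cap\tilde I,\qquad B:=\gamma^{-1}\bigl(ext(K)\bigr)\cap\tilde I.
\end{gather}
Since $\gamma$ is continuous and $int(K)$, $ext(K)$ are open in $X$, both $A$ and $B$ are open in the subspace $\tilde I$. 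They are disjoint because $int(K)\cap ext(K)=\emptyset$. The contradiction hypothesis $\gamma(\tilde I)\cap\partial K=\emptyset$ means $\gamma(t)\in int(K)\cup ext(K)$ for every $t\in\tilde I$, hence $A\cup B=\tilde I$. Finally $A\neq\emptyset$ since $t_{x_0}\in A$ (because $x_0=\gamma(t_{x_0})\in int(K)$ and $t_{x_0}\in\tilde I$ by construction of $\tilde I$), and likewise $B\neq\emptyset$ since $t_{y_0}\in B$. Thus $\{A,B\}$ is an open partition of $\tilde I$, contradicting the connectedness of the interval $\tilde I$. Therefore $\gamma(\tilde I)\cap\partial K\neq\emptyset$, which is \eqref{eq_1}.

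The only step that requires a little care — and the one I would single out as the main (minor) obstacle — is the verification that $int(K)$, $\partial K$, $ext(K)$ genuinely partition $X$, i.e. that no point lies in both $int(K)$ and $\partial K$, nor in both $ext(K)$ and $\partial K$, and that every point lies in one of the three. This is a purely set-theoretic unwinding of the definitions given in the Notations section (using $\partial K=\bar K\cap\overline{X\setminus K}$ and $ext(K)=int(X\setminus K)$), but it is the place where one must be precise rather than hand-wavy; everything else is the routine ``connected set meeting two disjoint opens covering it'' argument. One could alternatively avoid even this by invoking that $\tilde I$ is connected and $\gamma$ continuous, so $\gamma(\tilde I)$ is a connected subspace of $X$, then partitioning $\gamma(\tilde I)$ directly by the traces of $int(K)$ and $ext(K)$; the two formulations are equivalent and I would present whichever reads more cleanly alongside Definition 2.1.
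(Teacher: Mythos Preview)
Your proof is correct and follows essentially the same route as the paper: a contradiction argument based on the disjoint decomposition $X=int(K)\cup\partial K\cup ext(K)$, yielding an open partition of a connected set. The only cosmetic difference is that you pull back to $\tilde I$ via $\gamma^{-1}$ while the paper partitions $\gamma(\tilde I)$ directly---exactly the alternative you mention at the end---so the two presentations are interchangeable.
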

\begin{figure}[h]
	\centering
	\includegraphics[width=50mm]{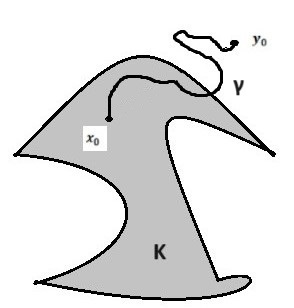}
	%\caption{a nice plot}
\end{figure}

	\begin{proof}
		Without loss of generality, let $ t_{x_0}<t_z, $ i.e $ \tilde{I}:=[t_{x_0},t_z]. $
		Let suppose the opposite of what we seek to prove i.e $ \gamma(\tilde{I}) \cap\partial K=\emptyset $. We use the fact that the metric space $ X $ receives a partition  in terms of the set $ K $, i.e
		\begin{gather}
			X=int(K) \cup \partial K\cup ext(K) \nonumber.
		\end{gather}
		As a result from the above, we obtain
		\begin{align}
			\gamma(\tilde{I})&=\gamma(\tilde{I})\cap X \nonumber\\
			&=\left(\gamma(\tilde{I})\cap int(K) \right) \cup \left(\gamma(\tilde{I})\cap \partial K \right) \cup\left( \gamma(\tilde{I})\cap ext(K) \right)\nonumber\\
			&=^{\left(  \gamma(\tilde{I}) \cap\partial K=\emptyset\right)  }\left(\gamma(\tilde{I})\cap int(K) \right)\cup\left( \gamma(\tilde{I})\cap ext(K) \right)\nonumber\\
			&=V_1\cup V_2\nonumber
		\end{align}
		where $ V_1:=\gamma\left( \tilde{I}\right) \cap int(K) $ and $ V_2:=\gamma\left( \tilde{I}\right) \cap ext(K)$. First we see that from the Principle of Inheritance, the sets $ V_1, V_2 $ are open in the metric subspace $ \left(\gamma(\tilde{I}),d_{\gamma(\tilde{I})} \right)  $. Moreover, $ V_1\neq\emptyset, $ since $ x_0\in\gamma(\tilde{I})\cap int(K)  $ and $ V_2\neq\emptyset, $ since from the previous claim, we have $ y_0\in ext(K)  $ and $ y_0\in\gamma\left( \tilde{I}\right) . $ Clearly, we see that $ V_1\cap V_2=\emptyset.$ 
		Consequently, the pair of sets $ \{V_1, V_2\} $ is an open (with respect to the relative metric $ d_{\gamma(\tilde{I})} $) partition of $ \gamma(\tilde{I}) $. From the last we conclude that the set $ \gamma(\tilde{I}) $ is not connected. The latter is contradicted with the fact that, since $ \tilde{I} $ is an interval in $ \R $, therefore a connected set and the function  $ \gamma:\tilde{I}\to X $, is continuous, it follows from known theorem \footnote{Let $ \left(X,d \right),\ \left(Y,\rho \right)   $ metric spaces, and $ f:X\to Y $ be a continuous function. If $ \left(X,d \right) $ is connected space, then the image $ f(X) $ is connected subset of $ Y $.} that $ \gamma(\tilde{I}) $ will be connected. As a result from the above analysis, we receive that:  $  \gamma(\tilde{I})\cap\partial K\neq\emptyset.   $ 
		\end{proof}
Next section is devoted solely to the proof of the main theorem.  As it will be apparent, the proof will be achieved  by distinguishing several appropriate cases.

\begin{figure}[h]
			\centering
			\includegraphics[width=150mm]{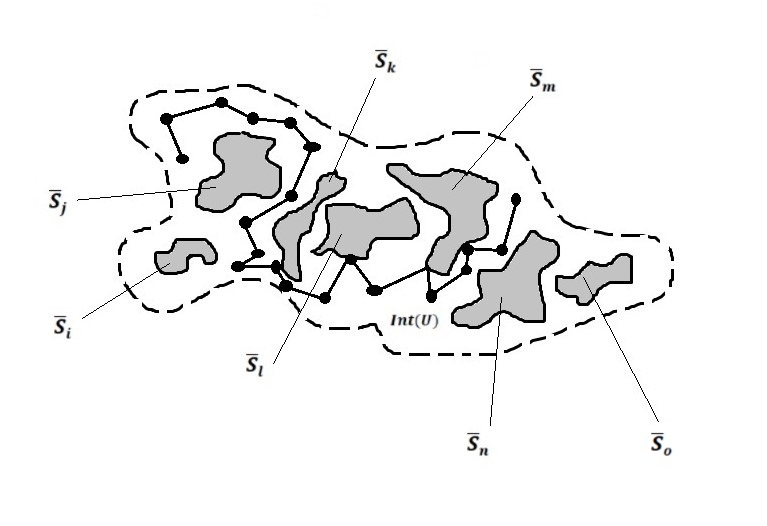}
			\caption{An example that represents the case $ \left( \bigcup_{j=2}^n[x_{j-1},x_j]\right)\bigcap\left(\bigcup_{i\in I}S_i \right)=\emptyset  $ }
		\end{figure}

\subsection{Proof of Theorem \ref{main_theorem_1}}
	\begin{proof}
		Let $ x,y\in U\setminus\left( \bigcup_{i\in I}S_i\right),  $ with $ x\neq y. $ Since by the assumption of the theorem, U is polygonally-connected, there exists a polygonal line inside $ U $ that connects the points $ x $ and $ y. $ In specific, let $ \bigcup_{j=2}^n[x_{j-1},x_j]\subset U $ where $ x_1:=x $ and $ x_n:=y. $ Now we have the following cases:\\
		\textbf{Case:} $ \left( \bigcup_{j=2}^n[x_{j-1},x_j]\right)\bigcap\left(\bigcup_{i\in I}S_i \right)=\emptyset.  $ 
		
		In this case we obtain that $\bigcup_{j=2}^n[x_{j-1},x_j]\subset U\setminus\left(\bigcup_{i\in I}S_i \right).   $ Moreover, observe that it is possible to have the situation $ \left( \bigcup_{j=2}^n[x_{j-1},x_j]\right) \cap \left( \bigcup_{i\in I}\partial S_i\right)\neq\emptyset.  $ Indeed, from the assumption of the case, we receive that $ \bigcup_{j=2}^n[x_{j-1},x_j]\subset \bigcup_{i\in I} S^c_i $. Moreover, since $ S_i $ is open, it folows that $ S_i\cap \partial S_i=\emptyset,\ \forall i\in I. $ Equivalently, $ \partial S_{\tilde{i}}\subset S^c_{\tilde{i}}\subset\bigcup_{i\in I} S^c_i,\ \forall \tilde{i}\in I.  $ Therefore, $ \bigcup_{i\in I}\partial S_i\subset\bigcup_{i\in I} S^c_i. $\\
		\textbf{Case:} $ \left( \bigcup_{j=2}^n[x_{j-1},x_j]\right)\bigcap\left(\bigcup_{i\in I}S_i \right)\neq\emptyset.  $\\
		 We define 
		 \begin{gather}
		 	J:=\left\lbrace j\in\{2,3,\dots,n\}:\ \exists i\in I,\  [x_{j-1},x_j]\cap \partial S_i \neq\emptyset\ \text{and}\  [x_{j-1},x_j]\cap S_i\neq\emptyset\right\rbrace. 
		 \end{gather}
	 \begin{claim}
	 	The set $ J $ is not empty.
	 \end{claim}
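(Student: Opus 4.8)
The plan is to argue by contradiction: suppose $J = \emptyset$, and derive a contradiction with the hypothesis of this case, namely that $\left(\bigcup_{j=2}^n [x_{j-1},x_j]\right) \cap \left(\bigcup_{i\in I} S_i\right) \neq \emptyset$. So fix some index $j_0 \in \{2,\dots,n\}$ and some $i_0 \in I$ with $[x_{j_0-1}, x_{j_0}] \cap S_{i_0} \neq \emptyset$; pick a point $p$ in this intersection. Since $p \in S_{i_0}$ and $S_{i_0}$ is open (hypothesis (ii)), we have $p \in int(S_{i_0})$. The idea is now to walk along the edge $[x_{j_0-1}, x_{j_0}]$ away from $p$ toward one of its endpoints and apply Proposition \ref{main_prop} with $K = S_{i_0}$.

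The key observation is that the endpoints $x_{j_0-1}$ and $x_{j_0}$ both lie in $U \setminus \bigcup_{i\in I} S_i$ when they are among the original $x,y$, but in general the intermediate vertices of the polygonal line need not avoid the holes. However, we can reduce to the situation where at least one endpoint of the relevant edge is outside $S_{i_0}$: among all vertices $x_1, \dots, x_n$, the first and last ($x = x_1$ and $y = x_n$) lie outside every $S_i$; hence walking along the polygonal line from $x_1$, there is a first vertex that lies in $S_{i_0}$ or, failing that, a first edge whose interior meets $S_{i_0}$ while its starting vertex does not. In either formulation, we obtain a line segment $[a,b] \subset U$ with a point $p' \in [a,b] \cap int(S_{i_0})$ and with $a \notin S_{i_0}$, so that $a \in \partial S_{i_0} \cup ext(S_{i_0})$. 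Parametrize this segment by $\gamma(t) = (1-t)a + tb$, $t \in [0,1]$; then $\gamma$ is continuous, $\gamma$ hits $int(S_{i_0})$ at the parameter value corresponding to $p'$, and $\gamma(0) = a \notin int(S_{i_0})$.

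Now apply Proposition \ref{main_prop} with $K = S_{i_0}$ (noting $\partial S_{i_0} \neq \emptyset$ by (ii)): either $a \in ext(S_{i_0})$, in which case Proposition \ref{main_prop} directly gives a point of the subsegment from $a$ to $p'$ lying in $\partial S_{i_0}$; or $a \in \partial S_{i_0}$, in which case we already have $a \in [a,b] \cap \partial S_{i_0}$. Either way, the edge $[a,b]$ meets both $\partial S_{i_0}$ and $S_{i_0}$, so the corresponding index $j$ belongs to $J$ — contradicting $J = \emptyset$. Hence $J \neq \emptyset$.

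The main obstacle is the bookkeeping around the intermediate vertices: one must be careful that the reduction to ``an edge meeting $int(S_{i_0})$ with an endpoint outside $S_{i_0}$'' is actually available, since a priori an entire edge could be swallowed by a single hole. The cleanest way around this is to exploit that $x_1 = x$ lies in $U \setminus \bigcup_{i\in I} S_i$ and to take the minimal $j$ for which $[x_{j-1},x_j] \cap S_{i_0} \neq \emptyset$ for the chosen $i_0$; then $x_{j-1}$ either lies outside $S_{i_0}$ (done, apply the argument above to $[a,b] = [x_{j-1}, x_j]$) or lies in some other hole $S_{i'}$ with $\bar S_{i'} \cap \bar S_{i_0} = \emptyset$, which via hypothesis (ii) again forces $x_{j-1} \in ext(S_{i_0})$, so the Proposition applies. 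I also expect to need the $(U\text{-}M)$ property only later; for this claim, openness of the $S_i$ and disjointness of their closures from (ii), together with Proposition \ref{main_prop}, suffice.
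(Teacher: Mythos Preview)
Your argument is correct, and in fact cleaner than the paper's. The paper does \emph{not} argue by minimality toward $x$; instead it fixes an arbitrary edge $[x_{j^*-1},x_{j^*}]$ meeting some $S_{i_0}$ and splits into cases. If that edge already meets $\partial S_{i_0}$, done. Otherwise it shows (via connectedness) that the whole edge lies in $S_{i_0}$, then walks \emph{forward} toward $y=x_n$, defines $L:=\{j\ge j^*:[x_j,x_{j+1}]\cap\partial S_{i_0}\neq\emptyset\}$, proves $L\neq\emptyset$ using Proposition~\ref{main_prop}, sets $\bar j=\min L$, and then still needs a separate (and somewhat lengthy) argument to show $[x_{\bar j},x_{\bar j+1}]\cap S_{i_0}\neq\emptyset$, so that $\bar j+1\in J$. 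Your choice of the \emph{minimal} $j$ with $[x_{j-1},x_j]\cap S_{i_0}\neq\emptyset$ immediately forces $x_{j-1}\notin S_{i_0}$ (since either $j=2$ and $x_1=x\notin\bigcup S_i$, or $j\ge 3$ and $x_{j-1}\in[x_{j-2},x_{j-1}]$ which is disjoint from $S_{i_0}$ by minimality), so one application of Proposition~\ref{main_prop} on the subsegment $[x_{j-1},p]$ finishes the job. This buys you a shorter proof with no need for the auxiliary set $L$ or the infimum argument the paper uses to verify $[x_{\bar j},x_{\bar j+1}]\cap S_{i_0}\neq\emptyset$. One small remark on presentation: your final dichotomy ``$x_{j-1}$ lies outside $S_{i_0}$ \emph{or} lies in some other hole $S_{i'}$'' is unnecessary---minimality already gives $x_{j-1}\notin S_{i_0}$ outright, and the relevant split is simply $x_{j-1}\in\partial S_{i_0}$ versus $x_{j-1}\in ext(S_{i_0})$, exactly as you had earlier with $a$.
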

 \begin{proof}
 	By our consideration above, there exists $ z\in \bigcup_{j=2}^n[x_{j-1},x_j]$ and $ z\in\bigcup_{i\in I}S_i. $ Equivalently, there exists $ j^*\in\{2,3,\dots,n\} $ and $ i_0\in I $ such that, $ z\in[x_{j^*-1},x_{j^*}] $ and $ z\in S_{i_0}. $ Then, we distinguish the following cases:
 	\begin{itemize}
 		\item $[x_{j^*-1},x_{j^*}]\cap\partial S_{i_0}\neq\emptyset.  $\\
 		If this case holds, then we obviously have that $ j^*\in J $
 		\item $ [x_{j^*-1},x_{j^*}]\cap\partial S_{i_0}=\emptyset $.\\
 		If this case holds then we obtain that, $ [x_{j^*-1},x_{j^*}]\subset int(S_{i_0})=^{(S_{i_0}\ \text{open})}S_{i_0}. $ Indeed, we observe that,
 		\begin{align}
 			[x_{j^*-1},x_{j^*}]&=\left( [x_{j^*-1},x_{j^*}]\cap int(S_{i_0})\right) \cup \left( [x_{j^*-1},x_{j^*}]\cap \partial S_{i_0}\right) \cup \left([x_{j^*-1},x_{j^*}]\cap ext( S_{i_0}) \right) \nonumber\\
 			&=\left( [x_{j^*-1},x_{j^*}]\cap int(S_{i_0})\right) \cup \left([x_{j^*-1},x_{j^*}]\cap ext( S_{i_0}) \right)=V_1\cup V_2
 		\end{align} 
 	where $ V_1, V_2 $ are open in the metric subspace $ [x_{j^*-1},x_{j^*}] $ with $ V_1\cap V_2=\emptyset. $ Because $ V_1\neq\emptyset, $ (since $z\in V_1  $), and $[x_{j^*-1},x_{j^*}]  $ is connected as a continuous image of the interval $ [0,1], $ we conclude that $ V_2=\emptyset. $ Therefore $ [x_{j^*-1},x_{j^*}]=V_1\subset int(S_{i_0})=S_{i_0}. $\\
 	Next, we define
 	\begin{gather}
 		L:=\{j\in\{j^*,j^*+1,\dots,n-1\}:\ [x_j,x_{j+1}]\cap\partial S_{i_0}\neq\emptyset\}.
 	\end{gather}
 	We claim that, $ L\neq\emptyset. $ Indeed, by the assumption of the theorem, $ y\in U\setminus\left( \bigcup_{i\in I}S_i\right) $, and as a result from the last, we obtain that $ y\equiv x_n\notin S_{i_0}. $ Therefore, since  $ S_{i_0} $ is open, it follows that $ y\in\partial S_{i_0}\cup int(S^c_{i_0}). $ Then, we distinguish the following cases:
 	\begin{itemize}
 		\item $ y\in\partial S_{i_0}. $ \\In this case, since $ y\equiv x_n\in[x_{n-1},x_n], $ by the definition of $ L, $ we receive $ n-1\in L. $
 		\item $ y\in int(S^c_{i_0})\equiv ext(S_{i_0}). $ \\In this case we define the path, $ \gamma_{z,y}:[a,b]\to X $ where $ \gamma_{z,y}:=\gamma_{[z,x_{j^*}]}\oplus\left( \oplus_{j=j^*}^{n-1}\gamma_{[x_{j},x_{j+1}]} \right) $. Obviously the path $ \gamma_{z,y} $ is continuous and connects the points $z\in S_{i_0}=int (S_{i_0}) $ and $ y\equiv x_n\in int(S^c_{i_0}) $.  Finally, from the Proposition \ref{main_prop}, it holds that $ \gamma_{z,y}([a,b]) \cap\partial S_{i_0}\neq\emptyset.$ Therefore, there exists $ t_0\in[a,b] $ such that $ \gamma_{z,y}(t_0)\in\partial S_{i_0}. $ Equivalently, there exists $ \tilde{j}\in\{j^*,\dots,n-1\} $ such that $ \gamma_{z,y}(t_0)\in[x_{\tilde{j}},x_{\tilde{j}+1}]\cap\partial S_{i_0}. $ From the last, we conclude that $ \tilde{j}\in L. $
 	\end{itemize}
 From both cases, we get that $ L\neq\emptyset $ and we are in the position to define $ \bar{j}:=\min L $. By the definition of $ \bar{j}, $ it follows that 
 \begin{gather}\label{rel_1}
 	[x_{\bar{j}},x_{\bar{j}+1}]\cap\partial S_{i_0}\neq\emptyset\ \text{and}\
 	\forall j\in\{j^*,\bar{j}-1\},\ [x_j,x_{j+1}]\cap\partial S_{i_0}=\emptyset.
 \end{gather} 
We claim now that, $ [x_{\bar{j}},x_{\bar{j}+1}]\cap S_{i_0}\neq\emptyset $. Indeed, let us suppose that $[x_{\bar{j}},x_{\bar{j}+1}]\cap S_{i_0}=\emptyset.  $ We define
\begin{align}
	\tilde{t}&:=\inf\{t\in[0,1]:\ (1-t)x_{\bar{j}}+t x_{\bar{j}+1}\in\partial S_{i_0}\}\equiv\inf L_{x_{\bar{j}}, x_{\bar{j}+1}}\nonumber\\
	\tilde{z}&:=(1-\tilde{t})x_{\bar{j}}+\tilde{t}x_{\bar{j}+1}.
\end{align}
We notice that, $ L_{x_{\bar{j}}, x_{\bar{j}+1}}:=\{t\in[0,1]:\ (1-t)x_{\bar{j}}+t x_{\bar{j}+1}\in\partial S_{i_0}\} $ is a non-empty set since $ [x_{\bar{j}},x_{\bar{j}+1}]\cap\partial S_{i_0}\neq\emptyset. $ Moreover, $ L_{x_{\bar{j}}, x_{\bar{j}+1}} $ is lower bounded from $ 0, $ therefore $ \tilde{t} $ is well defined. Now we claim that $ \tilde{t}\in[0,1]. $ Indeed, since $ [x_{\bar{j}},x_{\bar{j}+1}]\cap\partial S_{i_0}\neq\emptyset  $, there exists $ t^*\in L_{x_{\bar{j}}, x_{\bar{j}+1}}  $ and by the definition of $ \tilde{t}, $ we receive that $0\leq\tilde{t}\leq t^*\leq 1.  $

 Furthermore, $ \tilde{z}\in\partial S_{i_0}. $ Indeed, since $ \tilde{t} $ is a limit point of $ L_{x_{\bar{j}}, x_{\bar{j}+1}}, $ there exists a sequence $ (t_k)_{k\in\N}\subset L_{x_{\bar{j}}, x_{\bar{j}+1}}  $ such that, $ t_k\xrightarrow{k\to\infty}\tilde{t}. $ Now we define  the map $ s:[0,1]\to X,  $ with $ s(t):=(1-t)x_{\bar{j}}+t x_{\bar{j}+1}. $ From the continouity of $ s $ it follows that, $ s(t_k)\xrightarrow{k\to\infty}s(\tilde{t})\equiv\tilde{z}. $ Since, $ t_k\in L_{x_{\bar{j}}, x_{\bar{j}+1}},\ \forall k\in\N, $ it follows that $ s(t_k)\in\partial S_{i_0}. $ Therefore, $ \tilde{z} $ is a limit point of $ \partial S_{i_0} $ and due to the fact that $ \partial S_{i_0} $ is closed, we obtain finally that $ \tilde{z}\in\partial S_{i_0}. $\\
We distinguish the following cases for $ \tilde{t}. $ In specific, if $ \tilde{t}=0, $ then $ \tilde{z}=x_{\bar{j}}. $ Therefore, $ x_{\bar{j}}\in\partial S_{i_0} $ and as a result from the last, we receive that $ [x_{\bar{j}-1},x_{\bar{j}}]\cap\partial S_{i_0}\neq\emptyset $, which is a contradiction from the second part of (\ref{rel_1}). Now, if $ \tilde{t}\in (0,1], $ then from the definition of $ \tilde{t}, $ we obtain that for all  $ t\in[0,\tilde{t}),\ z_t:=(1-t)x_{\bar{j}}+ t x_{\bar{j}+1}\notin\partial S_{i_0}. $ Therefore, $ z_t\in ext(S_{i_0}),\ \forall t\in[0,\tilde{t}). $ (since $ [x_{\bar{j}},x_{\bar{j}+1}]\cap S_{i_0}=\emptyset $). For $ t=0, $ we receive that, $ z_0=x_{\bar{j}} $ and from the last we conclude that $ x_{\bar{j}}\in ext(S_{i_0}). $ At the same time, we know from the initial part of the proof that $ z\in[x_{j^*-1},x_{j^*}]\subset S_{i_0}. $ Next, we define the path, $ \gamma_{z,x_{\bar{j}}}:[\tilde{a},\tilde{b}]\to X$ where $ \gamma_{z,x_{\bar{j}}}:=\gamma_{[z,x_{j^*}]}\oplus\left(\oplus_{j=j^*}^{\bar{j}-1} \gamma_{[x_j,x_{j+1}]} \right).  $ Obviously, the path $ \gamma_{z,x_{\bar{j}}} $ is continuous and connects the points $ z\in S_{i_0} $ and $ x_{\bar{j}}\in ext(S_{i_0}). $ Finally from Proposition \ref{main_prop}, it holds that $ \gamma_{z,x_{\bar{j}}}([\tilde{a},\tilde{b}]) \cap\partial S_{i_0}\neq\emptyset$. Therefore, there exists $ \tilde{t_0}\in[\tilde{a},\tilde{b}] $ such that $ \gamma_{z,x_{\bar{j}}}(\tilde{t_0})\in \partial S_{i_0}. $ Equivalently, there exists $ \tilde{j}\in\{j^*,\dots,\bar{j}-1\},$ ($ [x_{j^*-1},x_{j^*}]\subset S_{i_0} $ and $ S_{i_0}\cap\partial S_{i_0}\neq\emptyset, $ since $ S_{i_0} $ is open ) such that  $ \gamma_{z,x_{\bar{j}}}(\tilde{t_0})\in[x_{\tilde{j}},x_{\tilde{j}+1}]\cap\partial S_{i_0}. $ I.e $ \tilde{j}\in L $ and $ \tilde{j}<\bar{j} $. The last contradicts with the definition of $ \bar{j}:=\min L. $\\
From the above analysis we conclude that $ [x_{\bar{j}},x_{\bar{j}+1}]\cap S_{i_0}\neq\emptyset. $ Then by the definition of $ \bar{j} $ it follows that $[x_{\bar{j}},x_{\bar{j}+1}]\cap \partial S_{i_0} \neq\emptyset, $ therefore by the definition of the set $ J, $ we obtain $ \bar{j}+1\in J. $
\end{itemize}
\end{proof}

Next, we define the following indexes
\begin{gather}
	k_1:=\min J,\ k_m:=\min(J\setminus\{k_{m-1}\}),\ \forall m\in\{2,3,\dots,j_0\}\ (\text{for}\ j_0\geq 2)
\end{gather}
where $ j_0:=\max J. $ I.e for $ j_0\geq 2,\ J=\{k_1<k_2<\dots<k_{j_0}\}. $ 
\begin{claim}
	If $ k_1\geq 3, $ then $\forall i\in I,\ \forall l\in\{2,3,\dots,k_1-1\},\  [x_{l-1},x_l]\subset\partial S_i \cup ext(S_i) $.
\end{claim}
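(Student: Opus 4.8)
The plan is to argue by contradiction, using the minimality of $k_1=\min J$ together with the open-partition/connectedness argument already deployed earlier in the proof for the segment $[x_{j^*-1},x_{j^*}]$. First I would restate the target: since each $S_i$ is open we have $int(S_i)=S_i$, and since $X=int(S_i)\cup\partial S_i\cup ext(S_i)$ is a partition of $X$, the inclusion $[x_{l-1},x_l]\subset\partial S_i\cup ext(S_i)$ is equivalent to $[x_{l-1},x_l]\cap S_i=\emptyset$. Note that the index set $\{2,3,\dots,k_1-1\}$ is non-empty precisely because we assumed $k_1\geq 3$. So it suffices to prove: for every $l\in\{2,\dots,k_1-1\}$ and every $i\in I$ the segment $[x_{l-1},x_l]$ is disjoint from $S_i$.

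Suppose this fails and set $A:=\{l\in\{2,\dots,k_1-1\}:\ \exists i\in I,\ [x_{l-1},x_l]\cap S_i\neq\emptyset\}$, so $A\neq\emptyset$; let $l_0:=\min A$ and fix $i_0\in I$ with $[x_{l_0-1},x_{l_0}]\cap S_{i_0}\neq\emptyset$. Because $l_0\leq k_1-1<\min J$ we have $l_0\notin J$, and since $[x_{l_0-1},x_{l_0}]$ already meets $S_{i_0}$, the definition of $J$ forces $[x_{l_0-1},x_{l_0}]\cap\partial S_{i_0}=\emptyset$. I would then repeat verbatim the argument used above in the subcase $[x_{j^*-1},x_{j^*}]\cap\partial S_{i_0}=\emptyset$: writing $[x_{l_0-1},x_{l_0}]$ as the disjoint union of the relatively open sets $[x_{l_0-1},x_{l_0}]\cap int(S_{i_0})$ and $[x_{l_0-1},x_{l_0}]\cap ext(S_{i_0})$, the connectedness of the segment as a continuous image of $[0,1]$ together with non-emptiness of the first piece gives $[x_{l_0-1},x_{l_0}]\subset int(S_{i_0})=S_{i_0}$; in particular $x_{l_0-1}\in S_{i_0}$.

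To finish I would split on the value of $l_0$. If $l_0=2$, then $x_{l_0-1}=x_1=x\in S_{i_0}$, contradicting $x\in U\setminus\bigcup_{i\in I}S_i$. If $l_0\geq 3$, then $l_0-1\in\{2,\dots,k_1-1\}$ and $x_{l_0-1}\in[x_{l_0-2},x_{l_0-1}]\cap S_{i_0}$, so $l_0-1\in A$ with $l_0-1<l_0=\min A$, again a contradiction. Hence $A=\emptyset$, which, by the equivalence noted above, is exactly the claim. The only thing requiring care is the index bookkeeping: checking that $l_0-1$ still lies in $\{2,\dots,k_1-1\}$, and recognizing that the base case $l_0=2$ is precisely where the hypothesis $x\notin\bigcup_{i\in I}S_i$ is consumed. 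All the topology is inherited from the partition $X=int(S_i)\cup\partial S_i\cup ext(S_i)$ and from the reasoning behind Proposition \ref{main_prop}, so I do not anticipate any genuinely new difficulty here.
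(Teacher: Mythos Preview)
Your proof is correct. The core is the same as the paper's---use $l\notin J$ together with the partition $X=S_i\cup\partial S_i\cup ext(S_i)$ and connectedness of the segment to force $[x_{l-1},x_l]\subset S_i$, and then push this containment back to $x_1=x$---but the bookkeeping differs. The paper fixes an \emph{arbitrary} $l_0\in\{2,\dots,k_1-1\}$, invokes Proposition~\ref{main_prop} on the concatenated path from $x_1$ to a point $z\in[x_{l_0-1},x_{l_0}]\cap S_{\tilde{i}}$ to produce some earlier segment $[x_{q_0-1},x_{q_0}]$ meeting $\partial S_{\tilde{i}}$, and then iterates backward from $q_0=l_0-1$ down to $q_0=2$, at each step showing that such an intersection would place $q_0\in J$ (contradicting $q_0<k_1$) and hence that $[x_{q_0-1},x_{q_0}]\subset S_{\tilde{i}}$. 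Your choice $l_0:=\min A$ compresses that entire descent into a single minimality step and never needs Proposition~\ref{main_prop}; it is a tidier packaging of the same argument, and the two base cases you isolate ($l_0=2$ versus $l_0\geq 3$) correspond exactly to the paper's terminal contradiction $x_1\in S_{\tilde{i}}$.
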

\begin{proof}
	For every $ l\in\{2,3,\dots,k_1-1\} $ it is obvious that $ l\notin J. $ Let $ l_0\in\{2,3,\dots,k_1-1\} $ be fixed, randomly selected. Therefore, by the definition of the set $ J, $ we conclude that, for all $  i\in I,  $ the conditions $ [x_{l_0-1},x_{l_0}] \cap \partial S_i\neq\emptyset$ and $ [x_{l_0-1},x_{l_0}] \cap S_i\neq\emptyset $ cannot hold at the same time. The last is equivalent to the following: $ \forall i\in I, $
	\begin{gather}
		[x_{l_0-1},x_{l_0}] \cap \partial S_i=\emptyset\ \text{and}\ [x_{l_0-1},x_{l_0}] \cap  S_i=\emptyset\label{eq_1}\\
		\text{or}\ [x_{l_0-1},x_{l_0}] \cap \partial S_i\neq\emptyset\ \text{and}\ [x_{l_0-1},x_{l_0}] \cap  S_i=\emptyset\label{eq_2}\\
		\text{or}\ [x_{l_0-1},x_{l_0}] \cap \partial S_i=\emptyset\ \text{and}\ [x_{l_0-1},x_{l_0}] \cap  S_i\neq\emptyset.\label{eq_3}
	\end{gather}
	For a random $ i\in I $, if the condition $ (\ref{eq_1})  $ holds, we receive that, $ [x_{l_0-1},x_{l_0}] \subset ext(S_i)\subset \partial S_i\cup ext(S_i)$. Moreover, for a random $ i\in I $ if the condition $ (\ref{eq_2})  $ holds, we obtain that  $ [x_{l_0-1},x_{l_0}] \subset \partial S_i\cup ext(S_i)$. Now we will show that the condition $ (\ref{eq_3})  $ cannot be true. Indeed,  let that condition to be true i.e we allow the case where for a random $ \tilde{i}\in I $ the condition $ (\ref{eq_3}) $ holds. Then using the fact that the line interval $ [x_{l_0-1},x_{l_0}] $ is connected, we obtain $ [x_{l_0-1},x_{l_0}] \subset S_{\tilde{i}}.$ As a result there exists $ z\in[x_{l_0-1},x_{l_0}] , z\in int(S_{\tilde{i}})$. Also we notice that $ x_1=x\notin S_{\tilde{i}}, $ i.e $ x_1\in\partial S_{\tilde{i}}\cup ext(S_{\tilde{i}}) $. We consider the polygonal line $ \gamma^*_{x_1,z}:=\cup_{q=2}^{l_0-1}\gamma_{[x_{q-1},x_q]}\cup\gamma_{[x_{l_0-1},z]} $. Since the map $ \gamma^*_{x_1,z} $ is continuous, using the Proposition $ \ref{main_prop} $, we receive that $ \gamma^*(I) \cap\partial S_{\tilde{i}}\neq\emptyset.$ Therefore, there exists $ q_0\in\{2,3,\dots,l_0-1\} $ such that $ [x_{q_0-1},x_{q_0}]\cap\partial S_{\tilde{i}}\neq\emptyset.  $ We show that the last statement cannot be true. In specific, we prove that $ \forall q\in\{2,\dots,l_0-1\},\ [x_{q-1},x_q]\cap\partial S_{\tilde{i}}=\emptyset $. Let us suppose that $ q_0=l_0-1 $ i.e $ [x_{l_0-2},x_{l_0-1}]\cap\partial S_{\tilde{i}}\neq\emptyset.$ We know already that $ x_{l_0-1}\in S_{\tilde{i}} $ (since $ [x_{l_0-1},x_{l_0}]\subset S_{\tilde{i}} $). From the last we conclude by the definition of the set $ J, $ that $ l_0-1\in J, $ therefore $ k_1=\min J\leq l_0-1. $ This is a contradiction, since $ l_0\leq k_1-1<k_1 $ and as a result  $ l_0-1<k_1 .$ Furthermore, we obtain that $ [x_{l_0-2},x_{l_0-1}]\cap\partial S_{\tilde{i}} =\emptyset$. Thus, $ [x_{l_0-2},x_{l_0-1}]\subset S_{\tilde{i}}\cup ext(S_{\tilde{i}}). $ Again using the connectedness argument of the line interval $ [x_{l_0-2},x_{l_0-1}] $ and the fact that $ [x_{l_0-2},x_{l_0-1}]\cap S_{\tilde{i}}\neq\emptyset $, we receive that $ [x_{l_0-2},x_{l_0-1}]\subset S_{\tilde{i}} $ hence, $ q_0\in\{2,\dots,l_0-2\}. $ If we suppose now that $ q_0=l_0-2 $, repeating the previous procedure, we conclude that $ [x_{l_0-3},x_{l_0-2}]\subset S_{\tilde{i}}. $ Continuing this procedure, until $ q_0=2, $ we conclude that $ [x_1,x_2]\subset S_{\tilde{i}} $ which is a contradiction since $ x\equiv x_1\in[x_1,x_2]\subset S_{\tilde{i}} $ and $ x\notin\cup_{i\in I}S_i. $ Thus, we proved that $ \forall q\in\{2,\dots,l_0-1\},\ [x_{q-1},x_q]\cap\partial S_{\tilde{i}}=\emptyset. $ In consequence, the condition $ (\ref{eq_3}) $ does not hold. Hence, $ \forall i\in I,\ [x_{l_0-1},x_{l_0}]\subset\partial S_i\cup ext(S_i). $
\end{proof}

For each $ m\in\{1,2,\dots,j_0\} $ we define:
\begin{gather}
	Q_m:=\left\lbrace i\in I:\ [x_{k_m-1},x_{k_m}]\cap\partial S_i\neq\emptyset\ \text{and}\ [x_{k_m-1},x_{k_m}]\cap S_i\neq\emptyset\right\rbrace. 
\end{gather}
\begin{claim}\label{cl_imp_1}
	For all $ m\in\{1,2,\dots,j_0\} $ the set $ Q_m $ is finite, non-empty.
\end{claim}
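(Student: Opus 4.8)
The plan is to establish the two assertions separately: the non-emptiness is immediate from the definitions, and the finiteness rests on combining the $(U-M)$ property with the pairwise disjointness of the closures $\bar{S}_i$.

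First I would prove $Q_m\neq\emptyset$. Each index $k_m$ belongs to $J$ by construction (the $k_m$ are, in increasing order, precisely the elements of $J$). Hence, by the very definition of $J$, there exists $i_0\in I$ with $[x_{k_m-1},x_{k_m}]\cap\partial S_{i_0}\neq\emptyset$ and $[x_{k_m-1},x_{k_m}]\cap S_{i_0}\neq\emptyset$; this is exactly the condition defining membership in $Q_m$, so $i_0\in Q_m$.

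For the finiteness I would invoke hypothesis (i) of Theorem \ref{main_theorem_1}, namely that $U$ satisfies the $(U-M)$ property, applied to the very polygonal line $\bigcup_{j=2}^n[x_{j-1},x_j]$ fixed at the start of the proof. This gives that the set
\[
F_m:=[x_{k_m-1},x_{k_m}]\cap\Big(\bigcup_{i\in I}\partial S_i\Big)
\]
is finite. Now for each $i\in Q_m$ the set $[x_{k_m-1},x_{k_m}]\cap\partial S_i$ is a non-empty subset of $F_m$, so I may fix a point $p_i\in[x_{k_m-1},x_{k_m}]\cap\partial S_i\subseteq F_m$. The assignment $i\mapsto p_i$ is injective: if $p_i=p_{i'}$ for some $i\neq i'$ in $Q_m$, then $p_i\in\partial S_i\cap\partial S_{i'}\subseteq\bar{S}_i\cap\bar{S}_{i'}$, which is empty by hypothesis (ii) of Theorem \ref{main_theorem_1}, a contradiction. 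Hence $Q_m$ embeds into the finite set $F_m$ and is therefore finite.

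The one place that requires care — and where I expect whatever obstacle there is to lie, though it is a mild one — is the injectivity step: it is essential that the hypothesis provides pairwise disjoint \emph{closures} $\bar{S}_i\cap\bar{S}_j=\emptyset$, and not merely disjoint sets $S_i$, since a boundary point shared by two of the $S_i$ would destroy the counting argument. With disjoint closures, each point of $F_m$ lies in $\partial S_i$ for at most one $i$, which is exactly what turns the $(U-M)$-finiteness of $F_m$ into the finiteness of $Q_m$.
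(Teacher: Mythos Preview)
Your proof is correct and follows essentially the same approach as the paper: non-emptiness comes from $k_m\in J$, and finiteness comes from choosing for each $i\in Q_m$ a boundary point on the segment, noting these are distinct by the disjoint-closures hypothesis, and bounding by the finite set $[x_{k_m-1},x_{k_m}]\cap\bigl(\bigcup_{i\in I}\partial S_i\bigr)$ furnished by the $(U-M)$ property. The only cosmetic difference is that the paper wraps the finiteness argument as a proof by contradiction (assume $Q_m$ infinite, extract a countable sequence of distinct boundary points inside a finite set), whereas you give the cleaner direct injection $Q_m\hookrightarrow F_m$.
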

\begin{proof}
	Let $ m\in\{1,2,\dots,j_0\} $ be randomly selected. Initially we prove that $ Q_m\neq\emptyset. $ By the definition of $ k_m $, there exists $ i^*\in I $ such that $ [x_{k_m-1},x_{k_m}]\cap S_{i^*}\neq\emptyset. $ Therefore, $ i^*\in Q_m. $ Next, we prove that, $ Q_m $ is finite. Let us suppose the opposite of what we seek to prove i.e $ Q_m $ is infinite. Consequetly there exists countably infinite  $ P\subset Q_m $. Let $ P:=\{i_{\lambda}:\ \lambda\in\N\} $. Since, $ P\subset Q_m, $ for each $ \lambda\in\N $ there exists $ y_{\lambda}\in[x_{k_m-1},x_{k_m}]\cap\partial S_{i_{\lambda}} $. We claim that the sequence $ (y_{\lambda})_{\lambda\in\N} $ consists of distinct terms. Indeed, let $ \lambda_1,\lambda_2\in\N $ with $ \lambda_1\neq \lambda_2 $ such that $ y_{\lambda_1}=y_{\lambda_2} $. As a result from the last, we receive $ \partial S_{i_{\lambda_1}}\cap \partial S_{i_{\lambda_2}}\neq\emptyset, $ which contradicts with the assumption of the theorem, that $ \bar{S}_{i_{\lambda_1}}\cap \bar{S}_{i_{\lambda_2}}=\emptyset. $ We conclude then, that $ 	\{y_{i_{\lambda}}:\ \lambda\in\N\} $ is countably infinite. From the fact that
	\begin{gather}
		\{y_{i_{\lambda}}:\ \lambda\in\N\}\subset[x_{k_m-1},x_{k_m}]\cap\left( \cup_{\lambda\in\N}\partial S_{i_{\lambda}} \right)\subset [x_{k_m-1},x_{k_m}]\cap\left( \cup_{i\in I}\partial S_{i} \right)
	\end{gather}
we obtain that the set $ [x_{k_m-1},x_{k_m}]\cap\left( \cup_{i\in I}\partial S_{i} \right)  $ is infinite, where it contradicts with the $ (U-M) $ assumption of the theorem.
  \end{proof}
For all $ m\in\{1,2,\dots,j_0\},\ i\in Q_m $ we define:
\begin{gather}
	L^{(m)}_{i}:=\{t\in[0,1]:\ z^{(m)}(t):=(1-t){x_{k_m-1}}+t x_{k_m}\in\partial S_i\}\nonumber\\
	t^{(m)}_{i,min}:=\inf L^{(m)}_{i}\ \text{and}\ t^{(m)}_{i,max}:=\sup L^{(m)}_{i}.
\end{gather}

\begin{claim}\label{cl_imp_2}
For every $ m\in\{1,2,\dots,j_0\} $ the following hold:
	\begin{itemize}
		\item [(i)] For all $  i\in Q_m,\ L^{(m)}_{i}\neq\emptyset, $ finite. Let $ L^{(m)}_{i}:=\{t^{(m)}_{i,1}<t^{(m)}_{i,2}<\dots<t^{(m)}_{i,r_{m,i}}\} $ where $ r_{m,i}:=card\left(L^{(m)}_{i} \right)  $
		\item [(ii)] For all $ i,j\in Q_m,\ \text{with}\  i\neq j,\  L^{(m)}_{i}\cap L^{(m)}_{j}=\emptyset$. 
	\end{itemize}
\end{claim}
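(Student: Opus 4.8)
The plan is to fix $ m\in\{1,2,\dots,j_0\} $ and prove the two items separately; both are transfers of properties of the segment $ [x_{k_m-1},x_{k_m}] $ to its affine parametrization $ z^{(m)}(t)=(1-t)x_{k_m-1}+tx_{k_m} $. First I would record a preliminary observation: the segment is non-degenerate, i.e.\ $ x_{k_m-1}\neq x_{k_m} $. Indeed, since $ k_m\in J $, there is $ i\in I $ with $ [x_{k_m-1},x_{k_m}]\cap\partial S_i\neq\emptyset $ and $ [x_{k_m-1},x_{k_m}]\cap S_i\neq\emptyset $; if the segment were a single point $ p $, this would force $ p\in\partial S_i\cap S_i $, impossible because $ S_i $ is open by assumption (ii). Hence $ z^{(m)}:[0,1]\to[x_{k_m-1},x_{k_m}] $ is a bijection, a fact I will use repeatedly.

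For item (i), non-emptiness of $ L^{(m)}_{i} $ for $ i\in Q_m $ is immediate from the definition of $ Q_m $: we have $ [x_{k_m-1},x_{k_m}]\cap\partial S_i\neq\emptyset $, so pulling a point of this set back through the bijection $ z^{(m)} $ produces $ t\in[0,1] $ with $ z^{(m)}(t)\in\partial S_i $, that is $ t\in L^{(m)}_{i} $. For finiteness, note that $ z^{(m)} $ restricts to a bijection between $ L^{(m)}_{i} $ and $ [x_{k_m-1},x_{k_m}]\cap\partial S_i $, and the latter set satisfies $ [x_{k_m-1},x_{k_m}]\cap\partial S_i\subset[x_{k_m-1},x_{k_m}]\cap\left(\bigcup_{i\in I}\partial S_i\right) $, which is finite by the $ (U\text{-}M) $ property (applied to the edge $ [x_{k_m-1},x_{k_m}] $ of the polygonal line $ \bigcup_{j=2}^n[x_{j-1},x_j] $). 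Therefore $ L^{(m)}_{i} $ is finite, and being a finite subset of $ \R $ it can be enumerated in strictly increasing order as $ \{t^{(m)}_{i,1}<t^{(m)}_{i,2}<\dots<t^{(m)}_{i,r_{m,i}}\} $ with $ r_{m,i}:=card\left(L^{(m)}_{i}\right) $.

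For item (ii), I would argue by contradiction. Suppose $ i,j\in Q_m $ with $ i\neq j $ and there exists $ t\in L^{(m)}_{i}\cap L^{(m)}_{j} $. Then $ z^{(m)}(t)\in\partial S_i\cap\partial S_j\subset\bar{S}_i\cap\bar{S}_j $, so $ \bar{S}_i\cap\bar{S}_j\neq\emptyset $, which contradicts the hypothesis $ \bar{S}_i\cap\bar{S}_j=\emptyset $ for $ i\neq j $ from assumption (ii) of the theorem. Hence $ L^{(m)}_{i}\cap L^{(m)}_{j}=\emptyset $.

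I do not anticipate a genuine obstacle in this claim; it is essentially bookkeeping. The only two points that need a little care are the non-degeneracy of the segment (without $ x_{k_m-1}\neq x_{k_m} $ the parametrization is not injective and the finiteness assertion in (i) could break down) and invoking finiteness of the set of boundary crossings strictly through the $ (U\text{-}M) $ hypothesis rather than through any compactness or measure-theoretic shortcut, since in an arbitrary normed space no such shortcut is available.
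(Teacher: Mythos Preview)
Your argument is correct and follows essentially the same route as the paper: non-emptiness from the definition of $Q_m$, finiteness from the $(U\text{-}M)$ property together with injectivity of the parametrization, and disjointness from the assumption $\bar S_i\cap\bar S_j=\emptyset$. Your preliminary remark that $x_{k_m-1}\neq x_{k_m}$ (hence $z^{(m)}$ is injective) actually fills a small gap, since the paper simply asserts ``since $z^{(m)}$ is injective'' without justification.
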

\begin{proof}
	Let  $ m\in\{1,2,\dots,j_0\} $ be randomly selected.
	\begin{itemize}
		\item[(i)] Let $  i\in Q_m,$ be randomly selected. Therefore, $ [x_{k_m-1},x_{k_m}]\cap S_i\neq\emptyset $ and $ [x_{k_m-1},x_{k_m}]\cap\partial S_i\neq\emptyset. $ From the last, it follows that there exists $ y\in[x_{k_m-1},x_{k_m}] $ and $ y\in\partial S_i. $ Equivalently, there exists $ \tilde{t}\in[0,1], y=z^{(m)}(\tilde{t})\in\partial S_i. $ Hence, $ \tilde{t}\in L^{(m)}_i. $\\
		We proceed now to prove that for all $ i\in Q_m $ the set $ L^{(m)}_{i} $ is finite. Let $  i^*\in Q_m,$ be randomly selected. From the definition of $ L^{(m)}_{i^*} $ it holds that $\forall t\in  L^{(m)}_{i^*},\ z^{(m)}(t)\in\partial S_{i^*}. $ Therefore, $ z^{(m)}(L^{(m)}_{i^*})\subset [x_{k_m-1},x_{k_m}]\cap\partial S_{i^*}\subset [x_{k_m-1},x_{k_m}]\cap \left(\cup_{i\in I}\partial S_{i} \right) $. From the $ (U-M) $ assumption, we have that $ [x_{k_m-1},x_{k_m}]\cap \left(\cup_{i\in I}\partial S_{i} \right) $ is finite, therefore $ z^{(m)}(L^{(m)}_{i^*}) $ is finite. Since, $ z^{(m)} $ is injective, we conclude that $ L^{(m)}_{i^*} $ is finite.
		\item [(ii)]  Let us suppose the opposite of what we seek to prove i.e there exist $ i,j\in Q_m $ with $ i\neq j $ such that $ L^{(m)}_i\cap L^{(m)}_j\neq\emptyset.  $ Therefore, there exists $ t_0\in  L^{(m)}_i  $ and $ t_0\in  L^{(m)}_j  $. Thus, $ z^{(m)}(t_0)\in\partial S_i $ and $ z^{(m)}(t_0)\in\partial S_j $. From the last, we obtain, $ \partial S_i\cap\partial S_j\neq\emptyset $ which contradicts with the assumption of theorem, $ \bar{S}_i \cap \bar{S}_j=\emptyset. $ 
	\end{itemize}
\end{proof}
Next, we define for every $  m\in\{1,2,\dots,j_0\} $ a special type of  sequences: $ \left( R_{i_{(v,m)}}\right)_{v\in\N}\subset [0,1],\ \left(i_{(v,m)} \right)_{v\in\N}\subset Q_m,\ \left( t^{(m)}_{i_{(v,m)}}\right)_{v\in\N}\subset\bigcup_{v\in\N} R_{i_{(v,m)}}    $. In specific, for every $  m\in\{1,2,\dots,j_0\} $ we define by induction the following sequences:
\begin{equation}
	 i_{(1,m)}\in Q_m\ \text{such that}\ t^{(m)}_{i_{(1,m)}}:=\min\{t^{(m)}_{i,min}:\ i\in Q_m\}\equiv t^{(m)}_{i_{(1,m)},min}\nonumber
	\end{equation}
\newline
and
	 \begin{equation}
	 \begin{cases}
	 	R_{i_{(1,m)}}:=\left\lbrace t\in\bigcup_{q\in Q_m\setminus\{i_{(1,m)}\}} L^{(m)}_{q}\ :\ \exists\tilde{\delta}_t>0,\ z^{(m)}(t+\tilde{\delta})\in \bigcup_{q\in Q_m\setminus\{i_{(1,m)}\}}S_{q}\ \text{and}\ t>t^{(m)}_{i_{(1,m)},\max}\right\rbrace\nonumber\\
	 	i_{(2,m)}:=\begin{cases}
	 		\arg\left\lbrace q\in Q_m\setminus\{i_{(1,m)}\}:\exists s\in\{1,2,\dots,r_{m,q}\}\ t^{(m)}_{q,s}=\min R_{i_{(1,m)}}\right\rbrace ,	& R_{i_{(1,m)}}\neq\emptyset\nonumber\\
	 		i_{(1,m)},	& R_{i_{(1,m)}}=\emptyset\nonumber
	 		\end{cases}
	 		\\
	 		t^{(m)}_{i_{(2,m)}}:=\begin{cases}
	 			\min R_{i_{(1,m)}}, & R_{i_{(1,m)}}\neq\emptyset\nonumber\\
	 			t^{(m)}_{i_{(1,m)},\max}, & R_{i_{(1,m)}}=\emptyset\nonumber\\
	 		\end{cases}
	 	\end{cases}
	 \end{equation}
 \newline
	 	For all  $ v\geq 3 $,
	 	\begin{equation}
	 	\begin{cases}
	 		R_{i_{(v-1,m)}}:=\left\lbrace t\in\bigcup_{q\in Q_m\setminus\{i_{(1,m)},\dots,i_{(v-1,m)}\}} L^{(m)}_{q}\ :\ \exists\tilde{\delta}_t>0,\ z^{(m)}(t+\tilde{\delta})\in \bigcup_{q\in Q_m\setminus\{i_{(1,m)},\dots,i_{(v-1,m)}\}}S_{q}\ \text{and}\ t>t^{(m)}_{i_{(v-1,m)},\max}\right\rbrace\nonumber\\
	 		i_{(v,m)}:=\begin{cases}
	 			\arg\left\lbrace q\in Q_m\setminus\{i_{(1,m)},\dots,i_{(v-1,m)}\}:\exists s\in\{1,2,\dots,r_{m,q}\}\ t^{(m)}_{q,s}=\min R_{i_{(v-1,m)}}\right\rbrace ,	& R_{i_{(v-1,m)}}\neq\emptyset\nonumber\\
	 			i_{(v-1,m)},	& R_{i_{(v-1,m)}}=\emptyset\nonumber
	 			\end{cases}
	 			\\
	 			t^{(m)}_{i_{(v,m)}}:=\begin{cases}
	 				\min R_{i_{(v-1,m)}}, & R_{i_{(v-1,m)}}\neq\emptyset\nonumber\\
	 				t^{(m)}_{i_{(v-1,m)},\max}, & R_{i_{(v-1,m)}}=\emptyset\nonumber
	 			\end{cases}
	 		\end{cases}
	\end{equation}
\\
\newline
	The next Claim guarantees that the above recursive definition of sequences is well-defined.
\begin{claim}\label{cl_imp_3}
	For every $ m\in\{1,2,\dots,j_0\} $ the following hold:
	\begin{itemize} 
		\item [(i)] For every $ v\in\N $ the set $ R_{i_{(v,m)}} $ is finite
		\item[(ii)] For every $ v\in\N $ with $ v\geq 2, $ the sequence of indexes $ \left( i_{(v,m)}\right) _{v\geq2}  $ is well defined i.e For all $ v\in\N $ with $ v\geq 2, $ if  $ R_{i_{(v-1,m)}}\neq\emptyset $, then there exists unique $ q\in Q_m\setminus\{i_{(1,m)},\dots,i_{(v-1,m)}\} $ with the property: $ \exists s\in\{1,2,\dots,r_{m,q}\}\ t^{(m)}_{q,s}=\min R_{i_{(v-1,m)}} $
		\item[(iii)] For every $ k,\lambda\in\N, $ with $ k,\lambda\geq 2 $, if $ k\neq\lambda $ and $ R_{i_{(k-1,m)}},R_{i_{(\lambda-1,m)}}\neq\emptyset $ then $ i_{(k,m)}\neq i_{(\lambda,m)} $
		\item[(iv)] If there exists $ v_0\in\N $ such that $ R_{i_{(v_0,m)}}=\emptyset $ then for all $ v\geq v_0,\ R_{i_{(v,m)}}=\emptyset $
		\item[(vi)] For every $ k,\lambda\in\N $ with $ k,\lambda\geq 2, $ if $ k<\lambda $ and $ R_{i_{(k-1,m)}},R_{i_{(\lambda-1,m)}}\neq\emptyset $ then $ t^{(m)}_{i_{(k,m)}} < t^{(m)}_{i_{(\lambda,m)}}  $
		\item[(vii)] There exists $ v\in\N $ with $ v\geq 2 $ such that $ R_{i_{(v-1,m)}}=\emptyset $
	\end{itemize}
\end{claim}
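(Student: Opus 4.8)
The plan is to deduce every item of the claim from one finiteness fact. Fix $m\in\{1,2,\dots,j_0\}$ and set $\mathcal{L}_m:=\bigcup_{q\in Q_m}L^{(m)}_q$. By Claim \ref{cl_imp_1} the index set $Q_m$ is finite and by Claim \ref{cl_imp_2}(i) every $L^{(m)}_q$ is a finite subset of $[0,1]$, so $\mathcal{L}_m$ is finite. Item (i) is then immediate: by its definition each $R_{i_{(v,m)}}$ is contained in a sub-union of the $L^{(m)}_q$, hence in $\mathcal{L}_m$, hence finite. For (ii) I would note that a non-empty $R_{i_{(v-1,m)}}$, being a finite set of reals, has a minimum; this minimum lies in $L^{(m)}_q$ for some admissible $q$, so by Claim \ref{cl_imp_2}(i) there is an $s$ with $t^{(m)}_{q,s}=\min R_{i_{(v-1,m)}}$, and if a second index $q'$ had the same property the common value would lie in $L^{(m)}_q\cap L^{(m)}_{q'}$, contradicting Claim \ref{cl_imp_2}(ii). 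Thus $q$ is unique and the recursion defining $i_{(v,m)}$ is unambiguous.

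Items (iii) and (iv) are formal. For (iii): when $R_{i_{(\lambda-1,m)}}\neq\emptyset$ the index $i_{(\lambda,m)}$ is chosen, by construction, from $Q_m\setminus\{i_{(1,m)},\dots,i_{(\lambda-1,m)}\}$, whereas for $k<\lambda$ the index $i_{(k,m)}$ belongs to $\{i_{(1,m)},\dots,i_{(\lambda-1,m)}\}$; hence $i_{(k,m)}\neq i_{(\lambda,m)}$, and the case $k>\lambda$ is symmetric. For (iv) I would induct on $v\geq v_0$: if $R_{i_{(v,m)}}=\emptyset$ then the construction puts $i_{(v+1,m)}=i_{(v,m)}$ and $t^{(m)}_{i_{(v+1,m)}}=t^{(m)}_{i_{(v,m)},\max}$, so both the excluded index set and the threshold entering the definition of $R_{i_{(v+1,m)}}$ coincide with those for $R_{i_{(v,m)}}$, whence $R_{i_{(v+1,m)}}=R_{i_{(v,m)}}=\emptyset$.

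The quantitative item (vi) rests on a single monotonicity step: whenever $R_{i_{(v,m)}}\neq\emptyset$ one has $t^{(m)}_{i_{(v+1,m)}}=\min R_{i_{(v,m)}}>t^{(m)}_{i_{(v,m)},\max}\geq t^{(m)}_{i_{(v,m)}}$. The strict middle inequality is part of the definition of $R_{i_{(v,m)}}$ (all its elements exceed $t^{(m)}_{i_{(v,m)},\max}$); the last inequality holds because $t^{(m)}_{i_{(1,m)}}=\inf L^{(m)}_{i_{(1,m)}}$ when $v=1$, and for $v\geq2$ the contrapositive of (iv) gives $R_{i_{(v-1,m)}}\neq\emptyset$, so $t^{(m)}_{i_{(v,m)}}=\min R_{i_{(v-1,m)}}\in L^{(m)}_{i_{(v,m)}}$. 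Given $k<\lambda$ with $R_{i_{(k-1,m)}},R_{i_{(\lambda-1,m)}}\neq\emptyset$, the contrapositive of (iv) makes every $R_{i_{(v,m)}}$ with $k\leq v\leq\lambda-1$ non-empty, and chaining the step produces $t^{(m)}_{i_{(k,m)}}<\cdots<t^{(m)}_{i_{(\lambda,m)}}$, which is (vi). Item (vii), the one I regard as the real content since it guarantees the construction stabilises, I would prove by contradiction: if $R_{i_{(v,m)}}\neq\emptyset$ for all $v\geq1$, then (iii) makes $v\mapsto i_{(v,m)}$ an injection of $\{2,3,\dots\}$ into the finite set $Q_m$ (equivalently, (vi) yields a strictly increasing sequence inside the finite set $\mathcal{L}_m$), which is impossible.

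The main obstacle is purely organisational: $i_{(v,m)}$, $t^{(m)}_{i_{(v,m)}}$ and $R_{i_{(v,m)}}$ are mutually recursive, so one must keep straight which indices are excluded at each stage and handle the branches $R_{i_{(v,m)}}=\emptyset$ and $R_{i_{(v,m)}}\neq\emptyset$ consistently throughout. No analytic input beyond Claims \ref{cl_imp_1} and \ref{cl_imp_2} is needed; in particular, once those are available, (vii) is a one-line pigeonhole argument and the remaining items are direct unwindings of the definitions.
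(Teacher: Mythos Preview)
Your proposal is correct and follows essentially the same approach as the paper: both reduce everything to the finiteness of $\bigcup_{q\in Q_m}L^{(m)}_q$ (your $\mathcal{L}_m$), invoke Claim~\ref{cl_imp_2}(ii) for the uniqueness in (ii), read (iii) off the recursive exclusion rule, chain the one-step monotonicity via the contrapositive of (iv) for (vi), and finish (vii) by the pigeonhole argument that (iii) would inject $\mathbb{N}$ into the finite set $Q_m$. Your handling of (iv) is in fact slightly cleaner than the paper's---you observe that when $R_{i_{(v,m)}}=\emptyset$ the excluded index set and the threshold defining $R_{i_{(v+1,m)}}$ literally coincide with those of $R_{i_{(v,m)}}$, giving equality $R_{i_{(v+1,m)}}=R_{i_{(v,m)}}$, whereas the paper argues by contradiction via an inclusion---and you make explicit the existence half of (ii), which the paper leaves implicit.
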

	\begin{proof}
		Let $ m\in\{1,2,\dots,j_0\} $ be randomly selected.
		\begin{itemize}
			\item[(i)] For all $ v\in\N $, by the definition of $ R_{i_{(v,m)}} $ we have
			\begin{gather}
				R_{i_{(v,m)}}\subset\bigcup_{q\in Q_m\setminus\{i_{(1,m)},\dots,i_{(v,m)}\}}L^{(m)}_{q}\subset\bigcup_{q\in Q_m}L^{(m)}_{q}\nonumber
			\end{gather}
		where from the Claim \ref{cl_imp_2} (i), it holds that $ L^{(m)}_{q} $ is finite for every $ q\in Q_m. $ Hence, $ \bigcup_{q\in Q_m}L^{(m)}_{q} $ is finite, which follows that $ R_{i_{(v,m)}} $ is also finite.
		
			\item[(ii)] Let $ v\in\N $ with $ v\geq 2 $ such that $ R_{i_{(v-1,m)}}\neq\emptyset $ and let $ q_1,q_2\in Q_m\setminus\{i_{(1,m)},\dots,i_{(v-1,m)}\} $ with $ q_1\neq q_2 $ which satisfy the following:
			\begin{gather}
				\exists s_1\in\{1,2,\dots,r_{m,q_1}\}, t^{(m)}_{\lambda_1,s_1}:=\min R_{i_{(v-1,m)}}\nonumber\\
				\exists s_2\in\{1,2,\dots,r_{m,q_2}\}, t^{(m)}_{\lambda_2,s_2}:=\min R_{i_{(v-1,m)}}\nonumber
			\end{gather}
			By the definition of numbers $ t^{(m)}_{q_1,s_1},t^{(m)}_{q_2,s_2} $, we obtain that $  t^{(m)}_{q_1,s_1}\in L^{(m)}_{q_1} $ and $ t^{(m)}_{q_2,s_2}\in L^{(m)}_{q_2}.  $ Therefore, $ L^{(m)}_{q_1} \cap L^{(m)}_{q_2}\neq\emptyset, $ which leads to a contradiction with the fact that since $ q_1\neq q_2, $ from Claim \ref{cl_imp_2}(ii) it holds that, $ L^{(m)}_{q_1} \cap L^{(m)}_{q_2}=\emptyset. $ 
			
			\item[(iii)] Let $ k,\lambda\in\N, $ with $ k,\lambda\geq 2 $ and $ k<\lambda $ such that $ R_{i_{(k-1,m)}},R_{i_{(\lambda-1,m)}}\neq\emptyset.  $ Therefore, the indexes $ i_{(k,m)}, i_{(\lambda,m)} $ are well defined. Thus by the definition of index $ i_{(\lambda,m)} $, it follows that\\ $ i_{(\lambda,m)}\in Q_m\setminus\{i_{(1,m)},i_{(2,m)},\dots,i_{(k,m)},\dots,i_{(\lambda-1,m)}\}. $ Consequently, $ i_{(\lambda,m)}\neq i_{(k,m)}.  $
			
			\item[(iv)] We prove this by induction. From the assumption we have that $ R_{i_{(v_0,m)}}=\emptyset. $ Let $ r \geq v_0 $ such that $ R_{i_{(r,m)}}=\emptyset.  $ We claim that $ R_{i_{(r+1,m)}}=\emptyset. $ Indeed, let suppose that $ R_{i_{(r+1,m)}}\neq\emptyset. $ Therefore, there exists $ t_0\in \bigcup_{q\in Q_m\setminus\{i_{(1,m)},\dots,i_{(r,m)},i_{(r+1,m)}\}} L^{(m)}_q $ such that the following hold:
			\begin{gather}
				 t_0>t^{(m)}_{i_{(r+1,m)},\max}\ \text{and}\ z^{(m)}(t_0+\delta_{t_0})\in\bigcup_{q\in Q_m\setminus\{i_{(1,m)},\dots,i_{(r,m)},i_{(r+1,m)}\}} S_q
			\end{gather}
			 We observe that $ \bigcup_{q\in Q_m\setminus\{i_{(1,m)},\dots,i_{(r,m)}i_{(r+1,m)}\}} S_q\subset \bigcup_{q\in Q_m\setminus\{i_{(1,m)},\dots,i_{(r,m)}\}} S_q.  $ Moreover, since $ R_{i_{(r,m)}}=\emptyset $, from the definition of $ t^{(m)}_{i_{(r+1,m)}} $, it holds that $ t^{(m)}_{i_{(r+1,m)}}=t^{(m)}_{i_{(r,m)},\max}. $ Hence, $ t_0>t^{(m)}_{i_{(r+1,m)},\max}\geq t^{(m)}_{i_{(r+1,m)}}= t^{(m)}_{i_{(r,m)},\max}. $ I.e $ t_0\in R_{i_{(r,m)}}  $ which contradicts with the assumption of the induction step, $ R_{i_{(r,m)}}=\emptyset. $ Consequently, we proved that $ R_{i_{(r+1,m)}}=\emptyset $ and by the Induction we get the desired result.
			
			\item [(vi)] Since $ R_{i_{(\lambda-1,m)}}\neq\emptyset $, by the definition of $ t^{(m)}_{i_{(\lambda,m)}} $ we receive that $ t^{(m)}_{i_{(\lambda,m)}}=\min R_{i_{(\lambda-1,m)}}.  $ Moreover, from the definition of $ R_{i_{(\lambda-1,m)}} $, we obtain that for all $ t\in R_{i_{(\lambda-1,m)}},\ t>t^{(m)}_{i_{(\lambda-1,m)},\max}.  $ Therefore,
			\begin{gather}\label{eq_4}
				R_{i_{(\lambda-1,m)}}\ni t^{(m)}_{i_{(\lambda,m)}}> t^{(m)}_{i_{(\lambda-1,m)},\max}\geq  t^{(m)}_{i_{(\lambda-1,m)}}
			\end{gather} 
			 Since, $ R_{i_{(\lambda-1,m)}}\neq\emptyset  $ then $ \forall v\in\{k,k+1,\dots,\lambda-2\},\ R_{i_{(v,m)}}\neq\emptyset. $ Indeed, if we suppose that there exists $ v^*\in \{k,k+1,\dots,\lambda-2\}$ such that $ R_{i_{(v^*,m)}}=\emptyset $, then from Claim \ref{cl_imp_3}(iv) it follows that $ R_{i_{(\lambda-1,m)}}=\emptyset $ where the last contradicts with the assumption of the (vi).\\
			 
			As we proved before, since $ R_{i_{(k,m)}}\neq\emptyset $ it follows that: $ t^{(m)}_{i_{(k+1,m)}}>t^{(m)}_{i_{(k,m)}}. $
			Similarly, since $ R_{i_{(k+1,m)}}\neq\emptyset $ it follows that: $ t^{(m)}_{i_{(k+2,m)}}>t^{(m)}_{i_{(k+1,m)}}. $
		    Continuing the above procedure for finite steps, we obtain finally
		    \begin{gather}\label{eq_5}
		    	t^{(m)}_{i_{(k,m)}}<t^{(m)}_{i_{(k+1,m)}}< t^{(m)}_{i_{(k+2,m)}}<\dots<t^{(m)}_{i_{(\lambda-1,m)}}
		    \end{gather}
		    Combining, $ (\ref{eq_4}) $ and $ (\ref{eq_5}) $ we conclude that $ t^{(m)}_{i_{(k,m)}}<t^{(m)}_{i_{(\lambda,m)}}. $
		    
			\item[(vii)] Let us suppose the opposite of what we seek to prove i.e $ \forall v\geq 2,\ R_{i_{(v-1,m)}}\neq\emptyset. $ From the Claim \ref{cl_imp_3}(iii), we obtain that the sequnece of indexes $ \left( i_{(v,m)}\right)_{v\geq 2}  $ consists of distinct terms. Therefore, the set $ \{i_{(v,m)}:\ v\in\N,\ v\geq 2\}\subset Q_m $ is infinite. Consequently, $ Q_m $ is infinite. The last contradicts with the Claim \ref{cl_imp_1}.
		\end{itemize}
	\end{proof}
From the above Claim $ \ref{cl_imp_3}(vii), $ we are in the position to define, for every $ m\in\{1,2,\dots,j_0\} $:
\begin{gather}
	v^{(m)}_0:=\min\{v\in\N:\ R_{i_{(v,m)}}=\emptyset\}.
\end{gather}
Therefore, we have the following strictly increasing\footnote{Look the Claim $ \ref{cl_imp_3}(vi) $} sequence $ \left( t^{(m)}_{i_{(v,m)}}\right)^{v^{(m)}_0}_{v=1} $ of points in $ [0,1] $ and the corresponding sequence $ \left( z^{(m)}\left( t^{(m)}_{i_{(v,m)}}\right) \right)^{v^{(m)}_0}_{v=1}  $ of points  in $ [x_{k_m-1},x_{k_m}]. $ From now on, we symbolize $ z^{(m)}_v:=z^{(m)}\left(t^{(m)}_{i_{(v,m)}} \right)  $ and $ z^{(m)}_{v,\ \max}:=z^{(m)}\left(t^{(m)}_{i_{(v,m),\ \max}} \right)  $.

			 \begin{figure}[h]
	\centering
	\includegraphics[width=160mm]{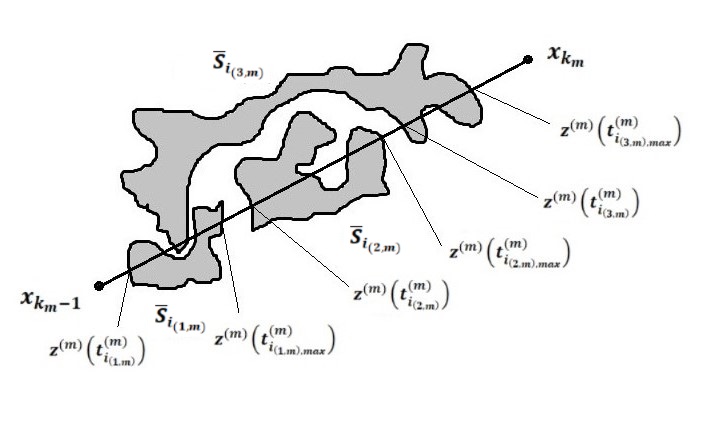}
	\caption{An example in $ \left(\R^d, \norm{\cdot}_2 \right)  $ that represents the points $  z^{(m)}\left( t^{(m)}_{i_{(v,m)}}\right),\ v\in\left\lbrace 1,2,\dots,v^{(m)}_0 \right\rbrace  $}
\end{figure}

Next, we proceed to the final part of the paper, the construction of the desired polygonal line in $ U\setminus \left(\bigcup_{i\in I}S_i \right)  $ which connects the points $ x\equiv x_1 $ and  $ y\equiv x_n.  $
\\
%-------------------------------------Construction of Polygonal Line---------------------------------
For every $ m\in\{1,2,\dots j_0\} $ there are two possible cases for $ x_{k_m}. $ In specific, $  x_{k_m}\in\bigcup_{i\in I}S_i $ or $ x_{k_m}\notin\bigcup_{i\in I}S_i. $ Thus, for each case for $ x_{k_m}, $ we construct a special type of polygonal line, namely $ \Gamma^{(1)}_{x_{k_m}}, $ if $ x_{k_m}\notin\bigcup_{i\in I}S_i $ and $ \Gamma^{(2)}_{x_{k_m}}, $ if $ x_{k_m}\in\bigcup_{i\in I}S_i $. Before proceeding to case separation, we construct a special type of polygonal line, $ \tilde{\Gamma}_{x_{k_m-1},\ x_{k_m}},\ \forall m\in\{1,2,\dots,j_0\} $, which as we will see later, will be a main part of the most complex ones polygonal lines $ \Gamma^{(1)}_{x_{k_m}},\ \Gamma^{(2)}_{x_{k_m}} $.

\begin{itemize}
	\item \textbf{Construction of Polygonal line} $ \tilde{\Gamma}_{x_{k_m-1},\ x_{k_m}},\  m\in\{1,2,\dots,j_0\} $\\
	In the following, for any $ \tilde{x},\tilde{y}\in K_{\delta_i}$  we denote by $ \hat{\Gamma}_{\tilde{x},\tilde{y}} $ any polygonal line in $ K_{\delta_i},\ i\in I $ (for a definition of $ K_{\delta_i}$, see theorem formulation) that connects the points  $ \tilde{x},\tilde{y}. $ This polygonal line exists by the assumption of the theorem, the set $ K_{\delta_i}\subset U\setminus\left(\bigcup_{i\in I}S_i \right)  $ is polygonally-connected.
	\begin{claim}\label{Polygonal_line_cl_1}
		For every $ m\in\{1,2,\dots,j_0\}, $ the following hold:
		\begin{itemize}
			\item[(i)] If $ x_{k_m-1}\notin\bigcup_{i\in I}S_i, $ then  $ [x_{k_m-1},\ z^{(m)}_1] \subset U\setminus \left( \bigcup_{i\in I}S_i\right) $
			\item[(ii)] If $  x_{k_m}\notin\bigcup_{i\in I}S_i, $ then  $  [z^{(m)}_{v_0,\ \max},\ x_{k_m}] \subset U\setminus \left( \bigcup_{i\in I}S_i\right) $
			\item[(iii)] If $ v_0^{(m)}\geq 3 $ then for every $ v\in\{1,2,\dots, v_0^{(m)}-1\},\ [z^{(m)}_{v,\ \max},\ z^{(m)}_{v+1}] \subset U\setminus \left( \bigcup_{i\in I}S_i\right)   $	
			\item[(iv)] Let $ x_{k_m-1}\in S_q $ for some $ q\in I. $ Then, $ q\in Q_m $ and also $ q=i_{(1,m)}. $ Respectively, if $ x_{k_m}\in S_q $ for some $ q\in I, $ then, $ q\in Q_m $
			\item[(v)] Let $ x_{k_m-1},x_{k_m}\in\cup_{i\in I}S_i.$ If $ v_0^{(m)}=1 $ then, there exists (unique) $ q\in I $ such that, $ x_{k_m-1}, x_{k_m}\in S_q. $
			\item[(vi)] If $ x_{k_m-1},\ x_{k_m}\in S_q $, for some $ q\in Q_m $,	 then $ v_0^{(m)}=1 $
			\item[(vii)] Let $ x_{k_m-1},x_{k_m}\in\cup_{i\in I}S_i.$ If $ v_0^{(m)}\geq 2 $ then, there exists (unique) $ q_1,q_2\in I $ with $ q_1\neq q_2, $ such that $ x_{k_m-1}\in S_{q_1}$ and $ x_{k_m}\in S_{q_2} $
			\item[(viii)] If $ x_{k_m-1}\in S_{q_1} $ and $ x_{k_m}\in S_{q_2} $ for some $ q_1,q_2\in Q_m $ with $ q_1\neq q_2, $ then $ v_0^{(m)}\geq 2 $
		\end{itemize}
	\end{claim}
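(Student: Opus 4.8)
The eight assertions split into two groups. Items (i)--(iii) say that certain sub-segments of $[x_{k_m-1},x_{k_m}]$ lie in $U\setminus\bigcup_{i\in I}S_i$; since every such sub-segment is contained in $[x_{k_m-1},x_{k_m}]\subset U$, only disjointness from $\bigcup_{i\in I}S_i$ has to be established. Items (iv)--(viii) are combinatorial facts that, in terms of $i_{(1,m)}$ and $v_0^{(m)}$, pin down which sets $S_i$ can contain the endpoints $x_{k_m-1},x_{k_m}$. The recurring tools are: Proposition \ref{main_prop}; the partition $X=S_i\cup\partial S_i\cup ext(S_i)$ together with openness of $S_i$; connectedness of intervals and line segments as continuous images of $[0,1]$; the disjointness $\bar{S}_i\cap\bar{S}_j=\emptyset$ for $i\neq j$; the finiteness supplied by Claims \ref{cl_imp_1} and \ref{cl_imp_2}; and the defining minimality/emptiness properties of $t^{(m)}_{i,min},t^{(m)}_{i,\max}$ and of the sets $R_{i_{(v,m)}}$, together with the strict monotonicity of Claim \ref{cl_imp_3}(vi).

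For (i)--(iii) the plan is to argue uniformly by contradiction. Write the relevant sub-segment as $\{z^{(m)}(t):t\in[\alpha,\beta]\}$ and suppose it contains a point $z^{(m)}(t_0)\in S_q$ for some $q\in I$; the endpoints of the parameter window (namely $0$, $1$, and the numbers $t^{(m)}_{i_{(\cdot,m)},\max}$) are excluded at once by the hypotheses on $x_{k_m-1},x_{k_m}$ and by $S_i\cap\partial S_i=\emptyset$ together with $\bar{S}$-disjointness, so $t_0$ is interior. First I would show $q\in Q_m$: the left endpoint of the window lies in $ext(S_q)$ (being either outside $\bigcup_{i\in I}S_i$, by hypothesis, or on some $\partial S_{i_{(v,m)}}$ with $i_{(v,m)}\neq q$, the remaining case $q=i_{(v,m)}$ being handled separately), so Proposition \ref{main_prop} applied to the segment joining it to $z^{(m)}(t_0)$ yields a point of $\partial S_q$ on $[x_{k_m-1},x_{k_m}]$, while $z^{(m)}(t_0)\in S_q$ gives the other condition in the definition of $Q_m$. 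Next I would rule out $q\in\{i_{(1,m)},\dots,i_{(v,m)}\}$ for the index $v$ attached to the window: once $t_0>t^{(m)}_{i_{(\ell,m)},\max}$ (which follows from the monotonicity of $(t^{(m)}_{i_{(v,m)},\max})_v$, itself a consequence of Claim \ref{cl_imp_3}(vi) and the definition of $R_{i_{(\cdot,m)}}$) and $z^{(m)}(t_0)\in S_{i_{(\ell,m)}}$, the absence of any $\partial S_{i_{(\ell,m)}}$-crossing after $t^{(m)}_{i_{(\ell,m)},\max}$ traps the entire tail of the segment in $S_{i_{(\ell,m)}}$, which collides with a later marker $z^{(m)}_{v+1}\in\partial S_{i_{(v+1,m)}}$ (closures disjoint) or with $x_{k_m}\notin\bigcup_{i\in I}S_i$. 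Finally, with $q\in Q_m\setminus\{i_{(1,m)},\dots,i_{(v,m)}\}$, set $t'':=\max\bigl(L^{(m)}_q\cap[0,t_0]\bigr)$; then $t''>t^{(m)}_{i_{(v,m)},\max}$ and the segment lies in $S_q$ on $(t'',t_0]$, whence $t''\in R_{i_{(v,m)}}$ --- contradicting $t^{(m)}_{i_{(v+1,m)}}=\min R_{i_{(v,m)}}>t''$ in case (iii), $R_{i_{(v_0^{(m)},m)}}=\emptyset$ in case (ii), and, in case (i), the minimality defining $t^{(m)}_{i_{(1,m)}}=\min\{t^{(m)}_{i,min}:i\in Q_m\}$, which forces $q=i_{(1,m)}$ and then $z^{(m)}_1\in S_{i_{(1,m)}}\cap\partial S_{i_{(1,m)}}$, impossible.

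For (iv)--(viii) I would proceed combinatorially. In (iv), if $x_{k_m-1}\in S_q$ and $[x_{k_m-1},x_{k_m}]\cap\partial S_q=\emptyset$, connectedness forces $[x_{k_m-1},x_{k_m}]\subset S_q$; but $k_m\in J$ is witnessed by some $i$ with $[x_{k_m-1},x_{k_m}]\cap\partial S_i\neq\emptyset$, necessarily $i\neq q$ (else $S_q\cap\partial S_q\neq\emptyset$), whence $\bar{S}_i\cap\bar{S}_q\neq\emptyset$, a contradiction; thus $q\in Q_m$, and the same reasoning gives the $x_{k_m}$-statement. That $q=i_{(1,m)}$ follows since the segment stays inside $S_q$ until its first $\partial S_q$-crossing at $t^{(m)}_{q,min}$, so if $i_{(1,m)}\neq q$ then $t^{(m)}_{i_{(1,m)},min}<t^{(m)}_{q,min}$ would put $z^{(m)}\bigl(t^{(m)}_{i_{(1,m)},min}\bigr)\in\partial S_{i_{(1,m)}}\cap S_q$, impossible by $\bar{S}$-disjointness. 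For (vi): by (iv), $q=i_{(1,m)}$, and with $x_{k_m}\in S_q$ the segment after $t^{(m)}_{i_{(1,m)},\max}$ is trapped in $S_{i_{(1,m)}}$, so no $t>t^{(m)}_{i_{(1,m)},\max}$ lies on $\partial S_p$ with $p\neq i_{(1,m)}$; hence $R_{i_{(1,m)}}=\emptyset$, i.e. $v_0^{(m)}=1$. For (v): assuming $v_0^{(m)}=1$ and $x_{k_m}\in S_{q_2}$ with $q_2\neq q_1=i_{(1,m)}$, Proposition \ref{main_prop} gives a $\partial S_{q_2}$-crossing past $t^{(m)}_{i_{(1,m)},\max}$, and then $t^{(m)}_{q_2,\max}\in R_{i_{(1,m)}}$ (the segment, ending in $S_{q_2}$, lies in $S_{q_2}$ just after $t^{(m)}_{q_2,\max}$), contradicting $R_{i_{(1,m)}}=\emptyset$; uniqueness of the $S_i$ containing a point (from $\bar{S}$-disjointness) finishes (v). Then (vii) follows from (iv) and (vi) (if $x_{k_m-1},x_{k_m}$ lay in a common $S_q$, then $q\in Q_m$ and $v_0^{(m)}=1$), and (viii) follows from (v) (if $v_0^{(m)}=1$, the common-$S_q$ conclusion of (v) contradicts $q_1\neq q_2$).

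The main obstacle is item (iii), together with the parallel part of (ii): there the whole recursive construction must be used simultaneously --- the minimality $t^{(m)}_{i_{(v+1,m)}}=\min R_{i_{(v,m)}}$, the strict inequality $t^{(m)}_{i_{(v,m)},\max}<t^{(m)}_{i_{(v+1,m)}}$, and the ``trapping'' consequence of $t^{(m)}_{i,\max}$ being the largest element of $L^{(m)}_i$ all have to be coordinated, while one excludes the finitely many previously-chosen indices $i_{(1,m)},\dots,i_{(v,m)}$. Some care is also needed in degenerate situations where consecutive markers coincide (for instance $t^{(m)}_{i_{(1,m)}}=0$, so $z^{(m)}_1=x_{k_m-1}$), but these are harmless, since a one-point sub-segment trivially satisfies the required containment.
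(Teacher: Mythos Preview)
Your plan is correct and follows essentially the same route as the paper's proof: for (i)--(iii) you argue by contradiction, use Proposition~\ref{main_prop} to force $q\in Q_m$, exclude the previously selected indices $i_{(1,m)},\dots,i_{(v,m)}$, and then collide with either the minimality defining $t^{(m)}_{i_{(1,m)}}$, the equality $t^{(m)}_{i_{(v+1,m)}}=\min R_{i_{(v,m)}}$, or the emptiness $R_{i_{(v_0^{(m)},m)}}=\emptyset$; for (iv)--(viii) you use exactly the combinatorial reductions the paper uses, deriving (vii) from (iv)+(vi) and (viii) from (v). The only cosmetic differences are that you package (i)--(iii) as a single uniform argument (the paper does three parallel arguments with ``Intermediate Steps''), and that for the exclusion of $q\in\{i_{(1,m)},\dots,i_{(v,m)}\}$ you use a forward ``trapping'' argument (no $\partial S_{i_{(\ell,m)}}$-crossing after $t^{(m)}_{i_{(\ell,m)},\max}$ pushes the whole tail into $S_{i_{(\ell,m)}}$) where the paper instead produces a point $\tilde t\in L^{(m)}_{i_{(\ell,m)}}$ with $\tilde t>t^{(m)}_{i_{(\ell,m)},\max}$ directly. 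Both versions work.

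One small wobble worth tightening: your final clause for case~(i), ``which forces $q=i_{(1,m)}$ and then $z^{(m)}_1\in S_{i_{(1,m)}}\cap\partial S_{i_{(1,m)}}$,'' is not the right ending. Once you have $t''\in L^{(m)}_q$ with $t''<t^{(m)}_{i_{(1,m)}}$ you get $t^{(m)}_{q,\min}\le t''<t^{(m)}_{i_{(1,m)}}\le t^{(m)}_{q,\min}$, which is already the contradiction for every $q\in Q_m$; nothing about $z^{(m)}_1\in S_{i_{(1,m)}}$ is needed (or true). Similarly, in (v) your use of $t^{(m)}_{q_2,\max}$ in place of the paper's explicitly constructed crossing $t^*$ is fine, but make sure you record why the segment lies in $S_{q_2}$ on $(t^{(m)}_{q_2,\max},1]$ (connectedness plus absence of $\partial S_{q_2}$-points past the max).
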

\begin{proof}
	Let $ m\in\{1,2,\dots,j_0\}. $ 
	\begin{itemize}
		\item[(i)] Let $ x_{k_m-1}\notin\bigcup_{i\in I}S_i. $\\
		We prove that $ [x_{k_m-1},\ z^{(m)}_1] \subset U\setminus \left( \bigcup_{i\in I}S_i\right) $ i.e, $ \forall i\in I,\ [x_{k_m-1},\ z^{(m)}_1]\cap S_i=\emptyset.  $ Let us suppose the opposite of what se seek to prove: There exists $ i^*\in I,\ [x_{k_m-1},\ z^{(m)}_1] \cap S_{i^*}\neq\emptyset.$ Since,
		\begin{gather}
			[x_{k_m-1},\ z^{(m)}_1]:=\left\lbrace (1-t)x_{k_m-1}+tz^{(m)}_1:\ t\in[0,1] \right\rbrace=\left\lbrace (1-t)x_{k_m-1}+t x_{k_m}:\ t\in[0,t^{(m)}_{i_{(1,m)}}]\right\rbrace\nonumber  
		\end{gather}
		there exists $ t^*\in[0,t^{(m)}_{i_{(1,m)}}] $ such that $ z^*\equiv z^{(m)}(t^*):=(1-t^*)x_{k_m-1}+t^* x_{k_m}\in S_{i^*}. $ Initially we prove that $ t^*<t^{(m)}_{i_{(1,m)}}. $ For this task we need the following intermediate step:\\ 
		\underline{\textit{Intermediate Step (I)}}: $ i^*\neq i_{(1,m)}. $
		\begin{proof}
			Let $ i^*= i_{(1,m)} $. Thus, $ i^*\in Q_m. $ We notice that $ x_{k_m-1}\in\partial S_{i_{(1,m)}}\cup ext(S_{i_{(1,m)}}) $. In case where, $ x_{k_m-1}\in ext(S_{i_{(1,m)}}) $, since $ z^*\in S_{i_{(1,m)}} $  from the Proposition \ref{main_prop}, it follows that $ z^{(m)}([0,t^*]) \cap\partial S_{i_{(1,m)}}\neq\emptyset. $ The same holds, if we make the assumption that $ x_{k_m-1}\in\partial S_{i_{(1,m)}}. $ Therefore, there exists, $ \tilde{t}\in[0,t^*] $, such that $ \tilde{z}:=z^{(m)}(\tilde{t}) \in\partial S_{i_{(1,m)}}. $ We observe that, $ \tilde{t}<t^* $, since, if $ \tilde{t}=t^*  $ we extract that $ \tilde{z}=z^*, $ i.e $ \partial S_{i_{(1,m)}}\cap S_{i_{(1,m)}}\neq\emptyset, $  where the last is contradiction since $ S_{i_{(1,m)}} $ is open. Now, from the definition of $ L^{(m)}_{i_{(1,m)}}, $ we receive that, $ \tilde{t}\in L^{(m)}_{i_{(1,m)}}.  $ Moreover, from the definition of $ t^{(m)}_{i_{(1,m)}}:=\min\{t^{(m)}_{q,\ \min}:\ q\in Q_m\}, $ we receive that $ t^{(m)}_{i_{(1,m)}}\leq\min L^{(m)}_{i_{(1,m)}}\leq \tilde{t}.  $ We see that the last contradicts with the fact that, $ \tilde{t}<t^*\leq t^{(m)}_{i_{(1,m)}}. $ Consequently, $ i^*\neq\ i_{(1,m)}.$
			\end{proof}
			We observe that, $ t^*<t^{(m)}_{i_{(1,m)}}. $ Indeed, if $ t^*=t^{(m)}_{i_{(1,m)}} $, then $ z^*=z^{(m)}_1. $ Since, $ z^*\in S_{i^*} $ and $ z^{(m)}_1\in\partial S_{i_{(1,m)}} $ we obtain $ \bar{S}_{i^*}\cap \bar{S}_{i_{(1,m)}}\neq\emptyset, $ which is a contradiction, since from the Intermediate Step (I), $ i^*\neq\ i_{(1,m)} $ therefore by the assumption of theorem, $ \bar{S}_{i^*}\cap \bar{S}_{i_{(1,m)}}=\emptyset.  $\\
			\underline{\textit{Intermediate Step (II)}}: $ i^*\in Q_m $.
			\begin{proof}
				We work similarly as we did in the Intermediate Step (I). Initially, we know that, $ x_{k_m-1}\in\partial S_{i^*}\cap ext(S_{i^*}). $ If $  x_{k_m-1}\in\partial S_{i^*} $ then, since $ z^*\in S_{i^*} $, we conclude that $ i^*\in Q_m. $ Let now, $ x_{k_m-1}\in ext(S_{i^*}). $ Again, since $ z^*\in S_{i^*} $, from the Proposition \ref{main_prop} we receive, $ z^{(m)}([0,t^*])\cap\partial S_{i^*}\neq\emptyset. $ Therefore\footnote{It holds that $[x_{k_m-1},\ z^*]:=\left\lbrace (1-t)x_{k_m-1}+tz^*:\ t\in[0,1] \right\rbrace=\left\lbrace (1-t)x_{k_m-1}+t x_{k_m}:\ t\in[0,t^*]\right\rbrace    $}, $ [x_{k_m-1},\ z^*]\cap\partial S_{i^*}\neq\emptyset.(\text{and}\ [x_{k_m-1},\ z^*]\cap S_{i^*}\neq\emptyset,\ \text{since}\ z^*\in S_{i^*} ). $ Hence, $ i^*\in Q_m.$
			\end{proof}
			Let now $ \bar{z}\in [x_{k_m-1},\ z^*] \cap\partial S_{i^*}=z^{(m)}([0,t^*])\cap\partial S_{i^*}$, i.e there exists (unique) $ \bar{t}\in[0,\ t^*] $ such that $ \bar{z}=z^{(m)}(\bar{t}) $. By the definition of $ L^{(m)}_{i^*} $ we obtain, $ \bar{t}\in L^{(m)}_{i^*}  $. Furthermore, $ \bar{t}< t^* $. Indeed, if we suppose the equality, then we have $ \bar{z}= z^* $. Consequently, $ \partial S_{i^*}\cap S_{i^*}\neq \emptyset $ which is a contradiction since the set $ S_{i^*} $ is open. In addition, we know from the definition of $ t^{(m)}_{i_{(1,m)}} $, that $ t^{(m)}_{i_{(1,m)}}\leq \min L^{(m)}_{i^*}\leq \bar{t}. $ Therefore,  $ t^{(m)}_{i_{(1,m)}}\leq\bar{t}<t^* $. The last, contradicts with previous result, we proved, i.e $ t^*<t^{(m)}_{i_{(1,m)}}. $  From the above analysis, we finally receive the desired result, $ \forall i\in I,\ [x_{k_m-1},\ z^{(m)}_1]\cap S_i=\emptyset.  $
		\item[(ii)] Let $  x_{k_m}\notin\bigcup_{i\in I}S_i. $\\
		We prove that $ \left[ z^{(m)}_{v_0^{(m)},\ \max},\ x_{k_m}\right]  \subset U\setminus \left( \bigcup_{i\in I}S_i\right), $ i.e, $ \forall i\in I,\ \left[ z^{(m)}_{v_0^{(m)},\ \max},\ x_{k_m}\right] \cap S_i=\emptyset. $ Let us suppose the opposite of what we seek to prove. Therefore, there exists $ i^*\in I $, such that, $ \left[ z^{(m)}_{v_0^{(m)},\ \max},\ x_{k_m}\right] \cap S_{i^*}\neq\emptyset.  $
		From the last, we obtain that there exists $ z^*\in\left[ z^{(m)}_{v_0^{(m)},\ \max},\ x_{k_m}\right] \cap S_{i^*}. $ Thus, there exists\footnote{It holds that, $ \left[ z^{(m)}_{v_0^{(m)},\ \max},\ x_{k_m}\right] :=\left\lbrace(1-t) z^{(m)}_{v_0^{(m)},\ \max}+t x_{k_m}:\ t\in[0,\ 1] \right\rbrace=\left\lbrace (1-t) x_{k_m-1}+t x_{k_m}:\ t\in\left[ t^{(m)}_{i_{(v_0^{(m)},\ m)},\ \max},\ 1\right] \right\rbrace   $} $ t^*\in\left[t^{(m)}_{i_{\left( v_0^{(m)},\ m \right) },\ \max},\ 1\right]  $ such that $ z^*\equiv z^{(m)}(t^*)\in S_{i^*}. $ For convenience, we set $ t_{v_0^{(m)}}:=t^{(m)}_{i_{\left( v_0^{(m)},\ m \right) },\ \max}. $\\
		In order to lead to a contradiction, we prove the following Intermediate Steps.\\
		\underline{\textit{Intermediate Step (I)}}: $ t^*>t_{v_0^{(m)}} $
		\begin{proof}
			Let us suppose the opposite of what we seek to prove, i.e $ t^*=t_{v_0^{(m)}}. $ Then, $ z^*\equiv z^{(m)}(t^*)=z^{(m)}\left( t_{v_0^{(m)}}\right) \equiv z^{(m)}_{v_0^{(m)},\ \max} $. Since, $ z^*\in S_{i^*} $ and $ z^{(m)}_{v_0^{(m)},\ \max}\in\partial S_{i_{\left( v_0^{(m)},\ m\right) }} $ we conclude that $ S_{i^*}\cap\partial S_{i_{\left( v_0^{(m)},\ m\right) }}\neq\emptyset.  $ If $ i^*=i_{(v_0,\ m)} $ then the expression $ S_{i^*}\cap\partial S_{i_{\left( v_0^{(m)},\ m\right) }}\neq\emptyset  $ leads to a contradiction, since the set $ S_{i^*}=S_{i_{\left( v_0^{(m)},\ m\right) }} $ is open. On the other hand, if $ i^*\neq i_{\left( v_0^{(m)},\ m\right) } $ then using the fact that $ S_{i^*}\cap\partial S_{i_{\left( v_0^{(m)},\ m\right) }}\neq\emptyset $ we conclude $ \bar{S}_{i^*}\cap\bar{S}_{i_{\left( v_0^{(m)},\ m\right) }}\neq\emptyset, $ which is a contradiction from the assumption of the theorem. Consequently, we obtain $ t^*>t_{v_0^{(m)}}. $ 
		\end{proof}
		\underline{\textit{Intermediate Step (II)}}: $ i^*\in Q_m $ and in specific, $ [z^*,\ x_{k_m}]\cap\partial S_{i^*}\neq\emptyset $.
		\begin{proof}
			Initially, we observe that $ [z^*,x_{k_m}]\cap S_{i^*}\neq\emptyset, $ therefore $ [x_{k_m-1},x_{k_m}]\cap S_{i^*}\neq\emptyset. $ By the assumption we have that $ x_{k_m}\notin S_{i^*} $. Thus, $ x_{k_m}\in \partial S_{i^*}\cup ext(S_{i^*}). $ We distinguish the following cases:
			\begin{itemize}
				\item Let $ x_{k_m}\in\partial S_{i^*}. $ Then $ [x_{k_m-1},x_{k_m}]\cap \partial S_{i^*}\neq\emptyset,$ therefore $ i^*\in Q_m. $
				\item Let $ x_{k_m}\in ext(S_{i^*}). $ Since $ z^*\in S_{i^*}, $ from Proposition \ref{main_prop}, it holds that 
				\begin{gather}
					z^{(m)}([t^*,1])\cap\partial S_{i^*}\neq\emptyset
				\end{gather}
			Furthermore, since $ [z^*,\ x_{k_m}]=z^{(m)}([t^*,\ 1])\subset z^{(m)}([0, 1])=[x_{k_m-1},\ x_{k_m}] $, we obtain $ [x_{k_m-1},\ x_{k_m}] \cap\partial S_{i^*}\neq\emptyset, $ therefore $ i^*\in Q_m. $
			\end{itemize}
		From both cases, we also obtain, $ [z^*,\ x_{k_m}]\cap\partial S_{i^*}\neq\emptyset. $
		\end{proof}
		Let $ \tilde{z}\in[z^*,x_{k_m}]\cap\partial S_{i^*}. $ I.e. there exists $ \tilde{t}\in[t^*,1],\ \tilde{z}=z^{(m)}(\tilde{t}). $ We observe that, $ \tilde{t}>t^*. $ Indeed, if $ \tilde{t}=t^* $ then $ \tilde{z}=z^*. $ From the last, we receive that $ S_{i^*}\cap\partial S_{i^*}\neq\emptyset $ which contradicts with the fact that $ S_{i^*} $ is open set. Therefore, $ \tilde{t}>t^*. $ Using the Intermediate Step (I), we have $ \tilde{t}>t^*>t_{v_0^{(m)}}. $\\
		\underline{\textit{Intermediate Step (III)}}: $ i^*\in Q_m \setminus\left\lbrace i_{(1,m)},\ i_{(2,m)},\dots, i_{\left( v_0^{(m)},\ m\right) }\right\rbrace.  $
		\begin{proof}
			From the Intermediate Step (II), we already know that $ i^*\in Q_m. $ We now prove  that, $ i^*\neq i_{(\lambda,m)},\ \forall\lambda\in\left\lbrace 1,2,\dots,v_0^{(m)}\right\rbrace.  $ First we see that, 
			\begin{gather}
				\tilde{t}>t^*>t_{v_0^{(m)}}\equiv t^{(m)}_{i_{\left( v_0^{(m)},\ m\right) },\ \max}\geq t^{(m)}_{i_{\left(v_0^{(m)},\ m \right)} }>t^{(m)}_{i_{\left( v_0^{(m)}-1,\ m\right) },\ \max}\geq t^{(m)}_{i_{\left(v_0^{(m)}-1,\ m\right) }}>\dots>t^{(m)}_{i_{(1,\ m)},\ \max}
			\end{gather}
		Let us suppose that there exists $ \tilde{\lambda}\in\left\lbrace 1,2,\dots,v_0^{(m)} \right\rbrace$ such that $ i^*=i_{(\tilde{\lambda},\ m)} $. Thus, $ L^{(m)}_{i^*}=L^{(m)}_{i_{(\tilde{\lambda},\ m)}} $. At the same time, we know that  $ \tilde{t}\in L^{(m)}_{i^*}, $ since $ \tilde{z}=z^{(m)}(\tilde{t})\in \partial S_{i^*}. $ Hence, $ \tilde{t}\in L^{(m)}_{i_{(\tilde{\lambda},\ m)}}. $ By the definition of $ t^{(m)}_{i_{(\tilde{\lambda},\ m)},\ \max}, $ we obtain that  $ \tilde{t}\leq t^{(m)}_{i_{(\tilde{\lambda},\ m)},\ \max}, $ which contradicts with the fact that  $\tilde{t}>t^{(m)}_{i_{(\tilde{\lambda},\ m)},\ \max}.  $ Consequently, $ i^*\neq i_{(\lambda,\ m)},\ \forall\lambda\in\left\lbrace 1,2,\dots, v_0^{(m)} \right\rbrace.  $
		\end{proof}
	   \underline{\textit{Intermediate Step (IV)}}: $ \left[ z^{(m)}_{v_0^{(m)},\ \max},\ z^*\right] \cap\partial S_{i^*}\neq\emptyset$.
		\begin{proof}
			We know that $ z^{(m)}_{v_0^{(m)},\ \max}\in\partial S_{i_{\left(v_0^{(m)},\ m \right) }}\subset ext (S_{i^*}) $ and $ z^*\in S_{i^*}. $ Thus, from the Proposition \ref{main_prop},we obtain that $ z^{(m)}\left(\left[t_{v_0^{(m)}},\ t^* \right]  \right)\cap\partial S_{i^*}\neq\emptyset.  $
		\end{proof}
	Let $ z^{**}\in \left[ z^{(m)}_{v_0^{(m)},\ \max},\ z^*\right] \cap\partial S_{i^*},  $ i.e. there exists\footnote{It holds that $ \left[ z^{(m)}_{v_0^{(m)},\ \max},\ z^*\right]:=\left\lbrace (1-t)\ z^{(m)}_{v_0^{(m)},\ \max}+ t\ z^*:\ t\in[0,\ 1] \right\rbrace=\left\lbrace (1-t)\ x_{k_m-1}+ t\ x_{k_m}:\ t\in\left[ t_{v_0^{(m)}},\ t^*\right]  \right\rbrace  $} $ t^{**}\in \left[t_{v_0^{(m)}},\ t^* \right]  $ such that $ z^{**}=z^{(m)}(t^{**}). $
	\\
	   \underline{\textit{Intermediate Step (V)}}: $ t^*>t^{**}>t_{v_0^{(m)}} $.
		\begin{proof}
		If $ t^{**}=t_{v_0^{(m)}}, $ then $ z^{**}= z^{(m)}_{v_0^{(m)},\ \max} $  where $ z^{**}\in\partial S_{i^*} $ and $  z^{(m)}_{v_0^{(m)},\ \max}\in\partial S_{i_{\left(v_0^{(m)},\ m \right) }}. $ Hence, $ \partial S_{i^*}\cap\partial S_{i_{\left(v_0^{(m)},\ m \right) }}\neq\emptyset. $ Therefore, $ \bar{S}_{i^*}\cap \bar{S}_{i_{\left(v_0^{(m)},\ m \right) }}\neq\emptyset.  $ The last leads to a contradiction, since, from the Intermediate Step (III), we have $ i^*\neq i_{\left(v_0^{(m)},\ m \right) } $ and as a result from the assumption of the theorem, we receive that $  \bar{S}_{i^*}\cap \bar{S}_{i_{\left(v_0^{(m)},\ m \right) }}=\emptyset.$ Consequently, $ t^{**}>t_{v_0^{(m)}}. $\\
		On the other hand, if $ t^*=t^{**}, $ it follows that $ z^*=z^{**}. $ Therefore, $ S_{i^*}\cap\partial S_{i^*}\neq\emptyset. $ The last contradicts with the fact that $ S_{i^*} $ is open set.
		\end{proof}
		\underline{\textit{Intermediate Step (VI)}}: $ t^{**}\in R_{i_{\left( v_0^{(m)},\ m\right) }} $.
		 	\begin{proof}
		 	Initially, we observe that $ t^{**}\in \bigcup \left\lbrace L^{(m)}_q:\ q\in Q_m\setminus\left\lbrace i_{(1,m)},\ i_{(2,m)},\dots,i_{\left( v_0^{(m)},\ m\right) } \right\rbrace\right\rbrace   $. Indeed, since $ z^{**}=z^{(m)}(t^{**})\in\partial S_{i^*},   $ it follows from the definition of $ L^{(m)}_{i^*} $ that $ t^{**}\in L^{(m)}_{i^*}. $ Moreover, from the Intermediate Step (III) we know that $ i^*\in Q_m\setminus\left\lbrace i_{(1,m)},\ i_{(2,m)},\dots,i_{\left(v_0^{(m)},\ m \right) } \right\rbrace.  $ Consequently, we receive that $ t^{**}\in \bigcup \left\lbrace L^{(m)}_q:\ q\in Q_m\setminus\left\lbrace i_{(1,m)},\ i_{(2,m)},\dots,i_{\left( v_0^{(m)},\ m\right)} \right\rbrace\right\rbrace. $ Furthermore, from the Intermediate Step (V), we know that $ t^{**}>t_{v_0^{(m)}}. $ It remains to prove the existence of $ \delta_{t^{**}}>0 $ such that $ z^{(m)}(t^{**}+\delta_{t^{**}})\in\bigcup \left\lbrace S_q:\ q\in Q_m\setminus\left\lbrace i_{(1,m)},\ i_{(2,m)},\dots,i_{\left( v_0^{(m)},\ m\right) } \right\rbrace\right\rbrace.$ Indeed, we set $ \delta_{t^{**}}:=t^*-t^{**}>0 $, where $\delta_{t^{**}}>0,  $ since from Intermediate Step (V) we know that $ t^*>t^{**}. $\\ Moreover, $ z^{(m)}(t^{**}+\delta_{t^{**}})=z^{(m)}(t^*)=z^*\in S_{i^*}\subset\bigcup \left\lbrace S_q:\ q\in Q_m\setminus\left\lbrace i_{(1,m)},\ i_{(2,m)},\dots,i_{\left( v_0^{(m)},\ m\right) } \right\rbrace\right\rbrace. $ Hence, $ t^{**}\in R_{i_{\left( v_0^{(m)},\ m\right) }}. $
		 \end{proof}
		From the last Intermediate Step (VI), we are in a contradiction, since $ R_{i_{\left( v_0^{(m)},\ m\right) }}=\emptyset $ (see above the definition of $ v_0^{(m)}. $ ) Finally, we receive the desired result, $  \left[ z^{(m)}_{v_0^{(m)},\ \max},\ x_{k_m}\right]  \subset U\setminus \left( \bigcup_{i\in I}S_i\right). $ 
		
		\item[(iii)] Let $ v_0^{(m)}\geq 3 $ and a random $ v^*\in\left\lbrace 1,2,\dots, v_0^{(m)}-1\right\rbrace  $. We prove that, $ \left[ z^{(m)}_{v^*,\ \max},\ z^{(m)}_{v^*+1}\right] \subset U\setminus \left( \bigcup_{i\in I}S_i\right). $ Let us suppose the opposite of what we seek to prove. Therefore, there exists $ i^*\in I, $ such that $ \left[z^{(m)}_{v,\max},\ z^{(m)}_{v+1} \right]\cap S_{i^*}\neq\emptyset.  $ From the last, we obtain that there exists $ z^*\in \left[z^{(m)}_{v,\max},\ z^{(m)}_{v+1} \right]\cap S_{i^*}. $ Thus, there exists\footnote{It holds that $ \left[z^{(m)}_{v,\max},\ z^{(m)}_{v+1} \right]:=\left\lbrace (1-t)z^{(m)}_{v,\max}+tz^{(m)}_{v+1}:\ t\in[0,1]\right\rbrace =\left\lbrace (1-t)x_{k_m-1}+t x_{k_m}:\ t\in\left[ t^{(m)}_{i_{(v,m)},\max},\ t^{(m)}_{i_{(v+1,m)}}\right] \right\rbrace  $} $ t^*\in \left[t^{(m)}_{i_{(v,m)},\max},t^{(m)}_{i_{(v+1,m)}} \right]  $ such that $ z^*\equiv z^{(m)}(t^*)\in S_{i^*}. $ In order to lead to a contradiction, we make the following Intermediate Steps, where their proofs are omitted since is similar with the proofs we did in (ii).\\
		\underline{\textit{Intermediate Step (I)}}: $ t^*< t^{(m)}_{i_{(v+1,m)}}.  $\\
		\underline{\textit{Intermediate Step (II)}}: $ i^*\in Q_m $ and in specific $ \left[ z^*,\ z^{(m)}_{v+1} \right]\cap\partial S_{i^*}\neq\emptyset  $.\\
		Let $ \tilde{z}\in\left[ z^*,\ z^{(m)}_{v+1} \right]\cap\partial S_{i^*}, $ i. e. there exists\footnote{It holds $ \left[ z^*, z^{(m)}_{v+1}\right]:=\left\lbrace (1-t) z^* +t z^{(m)}_{v+1}:\ t\in[0,1] \right\rbrace=\left\lbrace (1-t)x_{k_m-1}+t x_{k_m}:\ t\in\left[t^*,t^{(m)}_{i_{(v+1,m)}} \right] \right\rbrace    $} $ \tilde{t}\in\left[t^*,\ t^{(m)}_{i_{(v+1,m)}} \right]  $ such that $ \tilde{z}=z^{(m)}(\tilde{t}) $ We observe that, $ \tilde{t}>t^*. $ Indeed, if $ \tilde{t}=t^* $ then $ \tilde{z}=z^*. $ From the last, we receive that $ S_{i^*}\cap\partial S_{i^*}\neq\emptyset $ which contradicts with the fact that $ S_{i^*} $ is open set. Therefore, $ \tilde{t}>t^*. $ Using now the Intermediate Step (I), we receive $ \tilde{t}>t^*>t^{(m)}_{i_{(v,m)},\ \max}. $ \\
		\underline{\textit{Intermediate Step (III)}}: $ i^*\in Q_m\setminus\left\lbrace i_{(1,m)},\ i_{(2,m)},\dots,i_{(v,m)} \right\rbrace  $.\\
		\underline{\textit{Intermediate Step (IV)}}: $ \left[z^{(m)}_{v,\ \max},z^* \right]\cap\partial S_{i^*}\neq\emptyset  $.\\
		Let $ z^{**}\in\left[z^{(m)}_{v,\ \max},z^* \right]\cap\partial S_{i^*}, $ i. e. there exists $ t^{**}\in\left[t^{(m)}_{i_{(v,m)},\ \max},t^* \right]  $ such that $ z^{**}=z^{(m)}(t^{**}). $\\
		\underline{\textit{Intermediate Step (V)}}: $ t^*>t^{**}>t^{(m)}_{i_{(v,m)},\ \max}$\\
		Therefore, from the Intermediate Steps (I) and (V), we receive that $ t^{(m)}_{i_{(v,m)},\ \max}<t^{**}<t^{(m)}_{i_{(v+1,m)}}. $\\
		\underline{\textit{Intermediate Step (VI)}}: $ t^{**}\in R_{i_{(v,m)}} $\\
		At this stage we have a contradiction,  since by the definition of $ t^{(m)}_{i_{(v+1,m)}}=\min R_{i_{(v,m)}} $ but $ t^{**}\in R_{i_{(v,m)}} $ and $ t^{**}<t^{(m)}_{i_{(v+1,m)}}. $	Finally, we receive that $ \left[ z^{(m)}_{v^*,\ \max},\ z^{(m)}_{v^*+1}\right] \subset U\setminus\left(\bigcup_{i\in I}S_i\right). $

		\item[(iv)] Let $ x_{k_m-1}\in S_q $ for some $ q\in I. $ We prove first that $ q\in Q_m $ and in the end, that $ q=i_{(1,m)}. $\\
		Since, $ x_{k_m-1}\in S_q, $ it follows that, $ [x_{k_m-1},\ x_{k_m}]\cap S_q\neq\emptyset. $ Let us suppose that $ q\notin Q_m. $ Thus, $ [x_{k_m-1},\ x_{k_m}]\cap \partial S_q=\emptyset. $ From previous Claim, we know that $ Q_m\neq\emptyset. $ Let $ \tilde{q}\in Q_m. $ Definitely, $ q\neq\tilde{q}. $ Hence, $ \bar{S}_q\cap\bar{S}_{\tilde{q}}=\emptyset. $ Moreover, $ [x_{k_m-1},\ x_{k_m}]\cap\partial S_{\tilde{q}}\neq\emptyset. $ Let, $ \tilde{z}\in [x_{k_m-1},\ x_{k_m}]\cap\partial S_{\tilde{q}}, $ i. e. there exists, $ \tilde{t}\in[0,1] $ such that, $ \tilde{z}=z^{(m)}(\tilde{t})\in\partial S_{\tilde{q}}\subset ext(S_q).$ Using Proposition \ref{main_prop}, we receive that, $ z^{(m)}\left([0,\tilde{t}] \right)\cap\partial S_q\neq\emptyset.  $ Thus, $ \emptyset\neq[x_{k_m-1},\ \tilde{z}]\cap\partial S_q\subset[x_{k_m-1},\ x_{k_m}]\cap \partial S_q. $ The last contradicts with the previous statement, $ [x_{k_m-1},\ x_{k_m}]\cap \partial S_q=\emptyset. $ From the above, analysis, we conclude that, $ q\in Q_m. $\\
		Now, we proceed to the proof that, $ q=i_{(1,m)}. $ Let us suppose the opposite of what we seek to prove, i. e $ q\neq i_{(1,m)}. $ Therefore, by the assumption of the theorem, $ \bar{S}_{q}\cap \bar{S}_{i_{(1,m)}}=\emptyset.$ Since, $ x_{k_m-1}\in S_q $ and $ z^{(m)}_1\in\partial S_{i_{(1,m)}}, $ (i.e. $ z^{(m)}_1\in ext(S_q) $) it follows from the Proposition \ref{main_prop}, that $ z^{(m)}\left( [0,t^{(m)}_{i_{(1,m)}}]\right) \cap\partial S_{q}\neq\emptyset. $ Let $ z^*=z^{(m)}(t^*)\in\partial S_q $ for $ t^*\in\left[ 0, t^{(m)}_{i_{(1,m)}}\right].  $ It holds that, $ t^*<t^{(m)}_{i_{(1,m)}}.  $ Indeed, if we suppose that, $ t^*=t^{(m)}_{i_{(1,m)}} $, then it follows that $ \partial S_q\ni z^*=z^{(m)}\in\partial S_{i_{(1,m)}}. $ Thus, $ \bar{S}_{q}\cap \bar{S}_{i_{(1,m)}}\neq\emptyset$ which is a contradiction. Consequently, $ t^*<t^{(m)}_{i_{(1,m)}}. $ Furthermore, since $ t^*\in L^{(m)}_q, $ by the definition of, $ t^{(m)}_{q,\ \min} $, we obtain that $ t^{(m)}_{q,\ \min} \leq t^*<t^{(m)}_{i_{(1,m)}}. $ On the other hand, by the defintion of $ t^{(m)}_{i_{(1,m)}}=\min\left\lbrace t^{(m)}_{i_{(s,\ \min)}}:\ s\in Q_m \right\rbrace,  $ we receive that $ t^{(m)}_{i_{(1,m)}}\leq t^{(m)}_{q,\ \min} $ where we are in contradiction with the previous finding. \\
		With a similar, way we prove the case, $ x_{k_m}\in S_q. $
		\item[(v)]	Let $ v_0^{(m)}=1 $ and let $ x_{k_m-1},\ x_{k_m}\in\bigcup_{i\in I}S_i.  $ Therefore, there exists, (unique) $ q_1,q_2\in I $ such that $ x_{k_m-1}\in S_{q_1} $ and $ x_{k_m}\in S_{q_2} $. From the Claim \ref{Polygonal_line_cl_1} (iv), we receive that $ q_1,q_2\in Q_m $ and also $ q_1=i_{(1,m)}. $ Let us suppose that $ q_1\neq q_2. $ Since, $ z^{(m)}_{1,\ \max}\in\partial S_{i_{(1,m)}}\subset ext(S_{q_2}) $ and $ x_{k_m}\in S_{q_2}, $ from the Proposition \ref{main_prop} we receive that
		\begin{gather}
			z^{(m)}\left(\left[t^{(m)}_{i_{(1,m)},\ \max},\ 1 \right]  \right) \cap\partial S_{q_2}\neq\emptyset.
		\end{gather}
	  Let $ z^*=z^{(m)}(t^*)\in\partial S_{q_2}, $ where $ t^*\in \left[t^{(m)}_{i_{(1,m)},\ \max},\ 1 \right]  $. We observe that $ t^{(m)}_{i_{(1,m)}}<t^*<1. $ Indeed, $ t^*<1, $ since if we suppose that $ t^*=1 $ we obtain that $ z^*=z^{(m)}(t^*)= z^{(m)}(1)=x_{k_m}$ where $ z^*\in\partial S_{q_2} $ and $ z^{(m)} (1)=x_{k_m}\in S_{q_2}.$ Therefore, $ S_{q_2}\cap \partial S_{q_2}\neq\emptyset, $ which contradicts with the fact $ S_{q_2} $ is an open set. Next, we claim that $ t^{(m)}_{i_{(1,m)}}<t^*. $ This is true from the fact that if we suppose that $ t^{(m)}_{i_{(1,m)}}=t^*, $ we obtain $ \partial S_{q_1}\ni z^{(m)}_{1,\ \max}=z^*\in\partial S_{q_2},  $ thus $ \bar{S}_{q_1}\cap\bar{S}_{q_2}\neq\emptyset. $ On the other hand, from the condition $q_1\neq q_2  $ it follows from assumption of theorem,that $ \bar{S}_{q_1}\cap\bar{S}_{q_2}=\emptyset. $ Consequently, $ t^{(m)}_{i_{(1,m)}}<t^*. $
	  Furthermore, $ t^*\in R_{i_{(1,m)}}. $ Indeed, first we notice that $ q_2\in Q_m\setminus\{q_1=i_{(1,m)}\} $. Also, by the definition of $ t^*, $ it is obvious that $ t^*\in L^{(m)}_{q_2}. $ In addition, if we set as $ \delta_{t^*}:=1-t^*>0 $ (since $ t^*<1 $), then we obtain, that $ z^{(m)}(t^*+\delta_{t^*})=z^{(m)}(1)=x_{k_m}\in S_{q_2}. $ Therefore, by the definition of $  R_{i_{(1,m)}}, $ we receive that $ t^*\in R_{i_{(1,m)}}. $ From the last, we also conclude that, $  R_{i_{(1,m)}}\neq\emptyset. $ Therefore, $ v_0^{(m)}\geq 2, $ which contradicts with the assumption of Claim \ref{Polygonal_line_cl_1} (v), that $ v_0^{(m)}=1. $
	  
	  \item [(vi)] Let $ x_{k_m-1},x_{k_m}\in S_{q}, $ for some $ q\in Q_m. $ From the Claim \ref{Polygonal_line_cl_1} (iv), it holds that $ q=i_{(1,m)}. $\\
		\underline{\textit{Intermediate Step (I)}}: $ \forall t>t^{(m)}_{i_{(1,m)},\ \max},\ z^{(m)}(t)\in S_{i_{(1,m)}} $.
		\begin{proof}
			Let us suppose the opposite of what we seek to prove, i. e. there exists $ \tilde{t}>t^{(m)}_{i_{(1,m)}} $ such that $ \tilde{z}\equiv z^{(m)}(\tilde{t})\neq S_{i_{(1,m)}}. $ Then, from \ref{main_prop}, it holds that $ z^{(m)}\left([\tilde{t},\ 1] \right)\cap\partial S_{i_{(1,m)}}\neq\emptyset.  $ Let $ z^*=z^{(m)}(t^*)\in \partial S_{i_{(1,m)}} $ where $ t^*\in[\tilde{t},\ 1]. $ Thus, $ t^*\geq\tilde{t}>t^{(m)}_{i_{(1,m)},\ \max}. $ At the same time, by the definition of $ L^{(m)}_{i_{(1,m)}} $, it holds that $ t^*\in L^{(m)}_{i_{(1,m)}}. $ Therefore, $ t^*\leq t^{(m)}_{i_{(1,m)},\ \max}=\sup L^{(m)}_{i_{(1,m)}}.  $ The last contradicts with the previous finding, $t^*>t^{(m)}_{i_{(1,m)}}.  $ Consequently, the proof is complete.
		\end{proof}
		Next, we claim that $ R_{i_{(1,m)}}=\emptyset. $ Indeed, if $ R_{i_{(1,m)}}\neq\emptyset $ then there exists $ t'\in L^{(m)}_{q^*} $ where $ q^*\in Q_m\setminus\{i_{(1,m)}\} $ such that $ t'>t^{(m)}_{i_{(1,m)},\ \max}, $ i. e. $ z^{(m)}(t')\in\partial S_{q^*}\subset S_{i_{(1,m)}}. $ The last contradicts with the Intermediate Step (I). Consequently, $ R_{i_{(1,m)}}=\emptyset  $, hence $ v_0^{(m)}=1. $

		\item[(vii)] Let $ v_0^{(m)}\geq 2 $ and $ x_{k_m-1}, x_{k_m}\in\bigcup_{i\in I} S_i. $ Thus, there exist $ q_1,q_2\in I, $ such that $ x_{k_m-1}\in S_{q_1} $ and $ x_{k_m}\in S_{q_2}. $ From the Claim \ref{Polygonal_line_cl_1} (iv) it holds that $ q_1,q_2\in Q_m. $ Let us suppose that $ q_1=q_2=q $, i. e. $ x_{k_m-1},x_{k_m}\in S_q. $ Using now the Claim \ref{Polygonal_line_cl_1}(vi), we obtain that $ v_0^{(m)}=1. $ The last contradicts with the assumption of the Claim \ref{Polygonal_line_cl_1} (vii), $ v_0^{(m)}\geq 2. $ Hence, $ q_1\neq q_2. $
		\item[(viii)] Let $ x_{k_m-1}\in S_{q_1} $ and $ x_{k_m}\in S_{q_2}, $ where $ q_1,q_2\in Q_m $ with $ q_1\neq q_2. $ Let us suppose the opposite of what we seek to prove, i. e $ v_0^{(m)}=1. $ Then, from the Claim \ref{Polygonal_line_cl_1} (v) there exists $ q\in I $ such that $ x_{k_m-1},x_{k_m}\in S_q. $ Therefore, $ S_{q_1}\cap S_q\neq\emptyset $ and $ S_{q_2}\cap S_q\neq\emptyset $. If $ q_1\neq q $ then by the assumption of the theorem, we receive that $ S_{q_1}\cap S_q=\emptyset,  $ which contradicts with the previous finding. Consequently, $ q_1=q. $ Similarly, we obtain that $ q_2=q. $ Thus, $ q_1=q_2, $ which leads to a contradiction with the assumpion of Claim \ref{Polygonal_line_cl_1} (viii). Hence, $ v_0^{(m)}\geq 2. $

	\end{itemize}

\end{proof}
%\newpage
	We distinguish the following subcases:\\
	\textbf{Case:} $ x_{k_m-1},\ x_{k_m}\notin\bigcup_{i\in I}S_i $
	\\
	\begin{gather}
			\tilde{\Gamma}_{x_{k_m-1},\ x_{k_m}}:=\begin{cases}
				[x_{k_m-1},z^{(m)}_1]\cup\hat{\Gamma}_{z^{(m)}_1,\ z^{(m)}_{1,\ \max}}\cup[z^{(m)}_{1,\ \max},\ x_{k_m}],\ & v_0^{(m)}=1\nonumber\\
				[x_{k_m-1},\ z^{(m)}_1]\cup\left(\bigcup_{v=1}^{v_0^{(m)}-1}\left(  \hat{\Gamma}_{z^{(m)}_v,\ z^{(m)}_{v,\ \max}}\cup[z^{(m)}_{v,\ \max},\ z^{(m)}_{v+1}]\right) \right) \cup\\ \cup\left(\hat{\Gamma}_{z^{(m)}_{v_0^{(m)}},\ z^{(m)}_{v_0^{(m)},\ \max}}\cup[z^{(m)}_{v_0^{(m)},\ \max},\ x_{k_m}] \right),\ & v_0^{(m)}\geq 2 \nonumber
			\end{cases}
		\end{gather}
	\begin{figure}[htp]
		\centering
		\subfloat[Polygonal Line $  \tilde{\Gamma}_{x_{k_m-1},\ x_{k_m}}  $ (with red color) where $ x_{k_m-1},\ x_{k_m}\notin\bigcup_{i\in I}S_i $, with $ v^{(m)}_0=1. $ ]{%
			\includegraphics[clip,width=0.76\columnwidth]{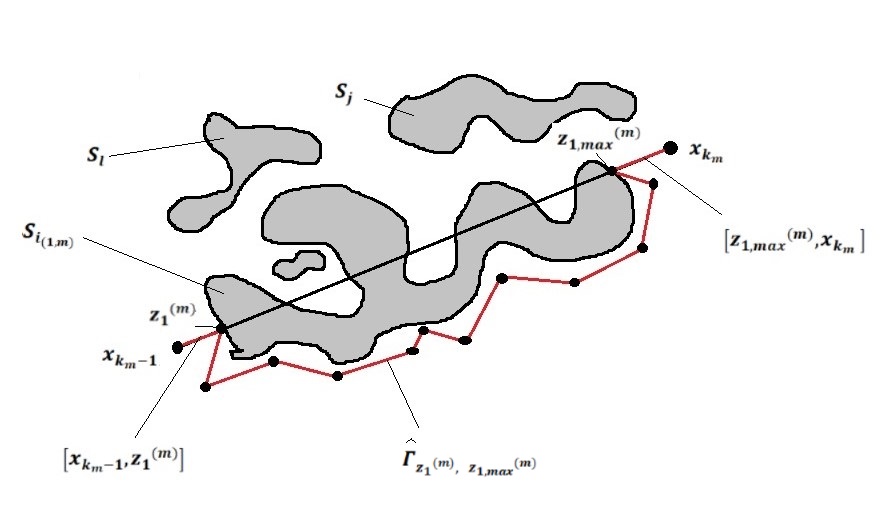}%
		}
		
		\subfloat[Polygonal Line $  \tilde{\Gamma}_{x_{k_m-1},\ x_{k_m}}  $ (with red color) where $ x_{k_m-1},\ x_{k_m}\notin\bigcup_{i\in I}S_i $, with $ v^{(m)}_0\geq 2. $]{%
			\includegraphics[clip,width=0.76\columnwidth]{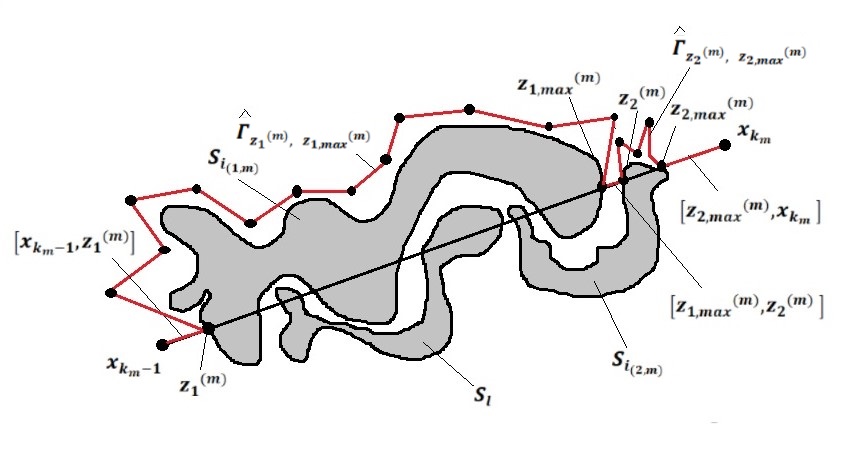}%
		}
		
		\caption{An example in $ \left(\R^d, \norm{\cdot}_2 \right)  $ that represents the Polygonal line  $  \tilde{\Gamma}_{x_{k_m-1},\ x_{k_m}}  $ (with red color) in case where $ x_{k_m-1},\ x_{k_m}\notin\bigcup_{i\in I}S_i $.}
	\end{figure}
	\newpage

	\begin{figure}[htp]
		\centering
		\subfloat[Polygonal Line $  \tilde{\Gamma}_{x_{k_m-1},\ x_{k_m}}  $ (with red color) where $ x_{k_m-1}\in \bigcup_{i\in I}S_i\  \text{and}\  x_{k_m}\notin\bigcup_{i\in I}S_i $, with $ v^{(m)}_0=1. $]{%
			\includegraphics[clip,width=0.85\columnwidth]{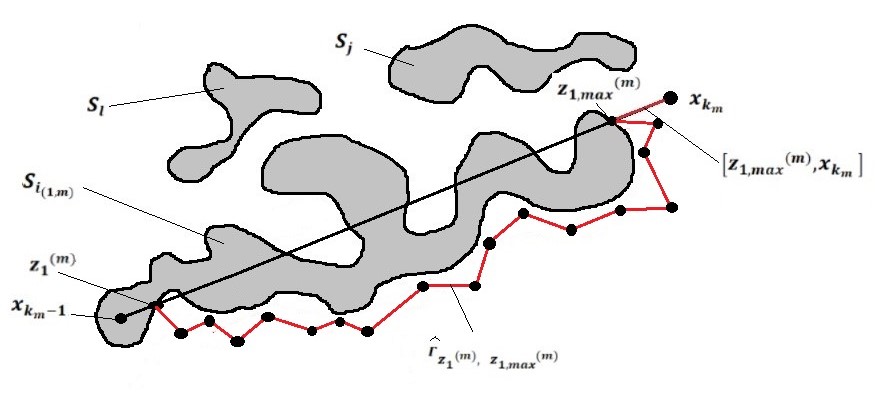}%
		}
		
		\subfloat[Polygonal Line $  \tilde{\Gamma}_{x_{k_m-1},\ x_{k_m}}  $ (with red color) where $ x_{k_m-1}\in \bigcup_{i\in I}S_i\  \text{and}\  x_{k_m}\notin\bigcup_{i\in I}S_i $, with $ v^{(m)}_0\geq 2. $]{%
			\includegraphics[clip,width=0.85\columnwidth]{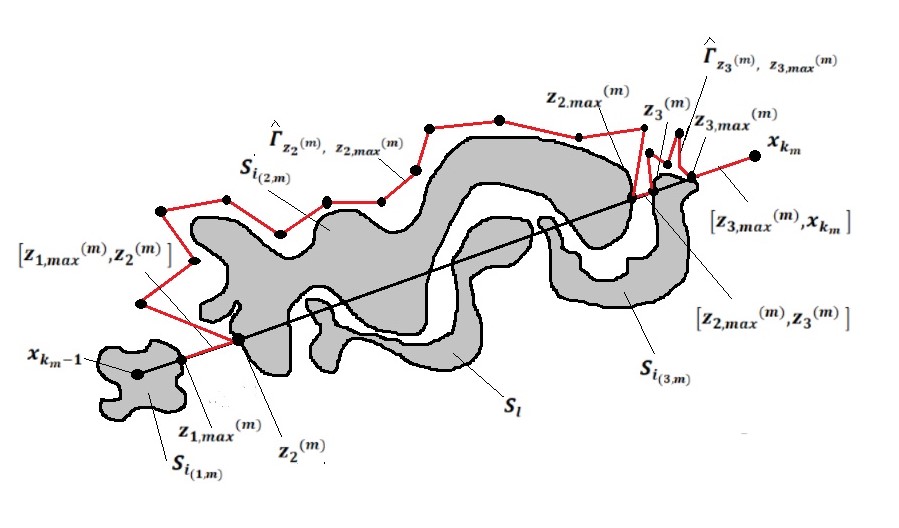}%
		}
		
		\caption{An example in $ \left(\R^d, \norm{\cdot}_2 \right)  $ that represents the Polygonal line  $  \tilde{\Gamma}_{x_{k_m-1},\ x_{k_m}}  $ (with red color) in case where $ x_{k_m-1}\in \bigcup_{i\in I}S_i\  \text{and}\  x_{k_m}\notin\bigcup_{i\in I}S_i $.}
	\end{figure}
	
		\textbf{Case:} $ x_{k_m-1}\in \bigcup_{i\in I}S_i\  \text{and}\  x_{k_m}\notin\bigcup_{i\in I}S_i $
	\\
	\begin{gather}
			\tilde{\Gamma}_{x_{k_m-1},\ x_{k_m}}:=\begin{cases}
				\hat{\Gamma}_{z^{(m)}_1,\ z^{(m)}_{1,\ \max}}\cup[z^{(m)}_{1,\ \max},x_{k_m}],\ & v_0^{(m)}=1\nonumber\\
				\left(\bigcup_{v=1}^{v_0^{(m)}-1}\left(  \hat{\Gamma}_{z^{(m)}_v,\ z^{(m)}_{v,\ \max}}\cup[z^{(m)}_{v,\ \max},\ z^{(m)}_{v+1}]\right) \right)\cup\left(\hat{\Gamma}_{z^{(m)}_{v_0^{(m)}},\ z^{(m)}_{v_0^{(m)},\ \max}}\cup[z^{(m)}_{v_0^{(m)},\ \max},\ x_{k_m}] \right),\ & v_0^{(m)}\geq 2\nonumber
			\end{cases} 
		\end{gather}
		
\newpage

	\begin{figure}[htp]
		\centering
		\subfloat[Polygonal Line $  \tilde{\Gamma}_{x_{k_m-1},\ x_{k_m}}  $ (with red color) where $x_{k_m-1}\notin \bigcup_{i\in I}S_i\  \text{and}\  x_{k_m}\in\bigcup_{i\in I}S_i $, with $ v^{(m)}_0=1. $]{%
			\includegraphics[clip,width=0.7\columnwidth]{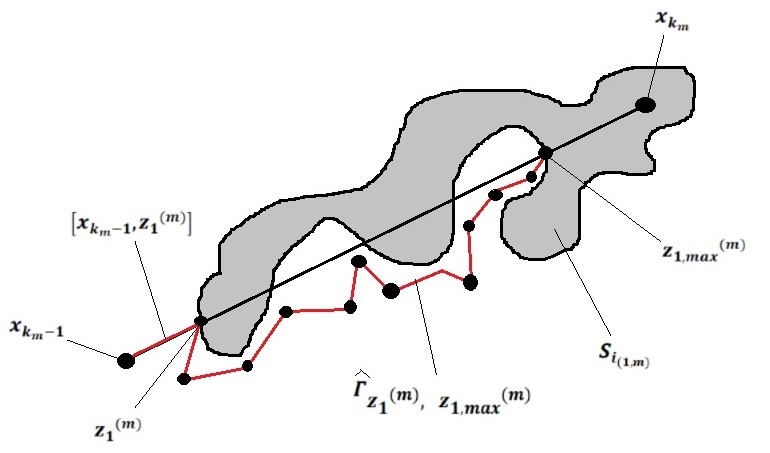}%
		}
		
		\subfloat[Polygonal Line $x_{k_m-1}\notin \bigcup_{i\in I}S_i\  \text{and}\  x_{k_m}\in\bigcup_{i\in I}S_i $, with $ v^{(m)}_0\geq 2. $]{%
			\includegraphics[clip,width=0.7\columnwidth]{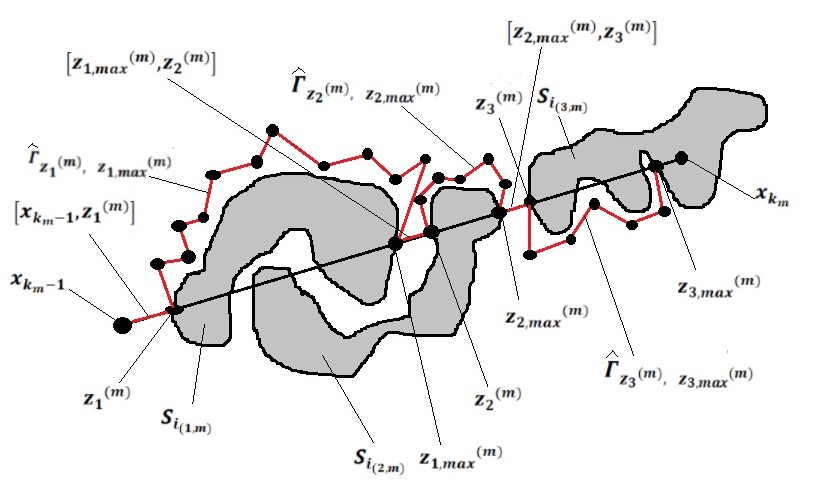}%
		}
		
		\caption{An example in $ \left(\R^d, \norm{\cdot}_2 \right)  $ that represents the Polygonal line  $  \tilde{\Gamma}_{x_{k_m-1},\ x_{k_m}}  $ (with red color) in case where $x_{k_m-1}\notin \bigcup_{i\in I}S_i\  \text{and}\  x_{k_m}\in\bigcup_{i\in I}S_i $.}
	\end{figure}
	
			\textbf{Case:} $x_{k_m-1}\notin \bigcup_{i\in I}S_i\  \text{and}\  x_{k_m}\in\bigcup_{i\in I}S_i $
	\\
	\begin{gather}
			\tilde{\Gamma}_{x_{k_m-1},\ x_{k_m}}:=\begin{cases}
				[x_{k_m-1},\ z^{(m)}_1]\cup\hat{\Gamma}_{z^{(m)}_1,\ z^{(m)}_{1,\ \max}},\ & v_0^{(m)}=1\nonumber\\
				[x_{k_m-1},\ z^{(m)}_1]\cup\left(\bigcup_{v=1}^{v_0^{(m)}-1}\left(  \hat{\Gamma}_{z^{(m)}_v,\ z^{(m)}_{v,\ \max}}\cup[z^{(m)}_{v,\ \max},\ z^{(m)}_{v+1}]\right) \right)\cup \hat{\Gamma}_{z^{(m)}_{v_0^{m}},\ z^{(m)}_{v_0,\ \max}},\ & v_0^{(m)}\geq 2 \nonumber
			\end{cases}
		\end{gather}

	\newpage

	\begin{figure}[htp]
		\centering
		\subfloat[Polygonal Line $  \tilde{\Gamma}_{x_{k_m-1},\ x_{k_m}}  $ (with red color) where $ x_{k_m-1}\in \bigcup_{i\in I}S_i\  \text{and}\  x_{k_m}\in\bigcup_{i\in I}S_i $, with $ v^{(m)}_0=1. $]{%
			\includegraphics[clip,width=0.65\columnwidth]{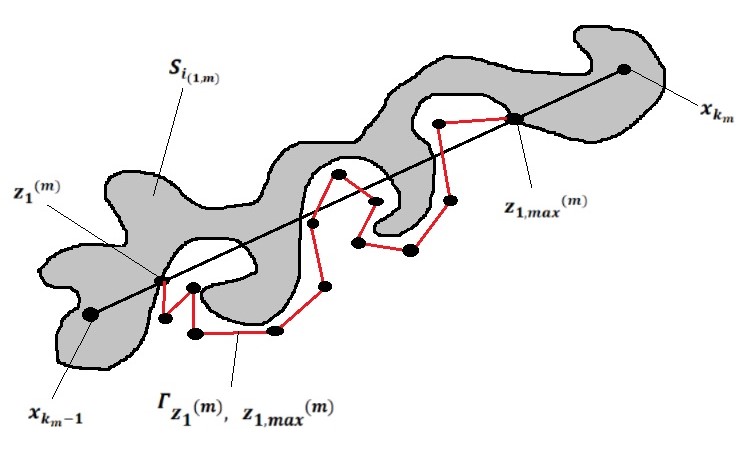}%
		}
		
		\subfloat[Polygonal Line $  \tilde{\Gamma}_{x_{k_m-1},\ x_{k_m}}  $ (with red color) where $ x_{k_m-1}\in \bigcup_{i\in I}S_i\  \text{and}\  x_{k_m}\in\bigcup_{i\in I}S_i $, with $ v^{(m)}_0\geq 2. $]{%
			\includegraphics[clip,width=0.65\columnwidth]{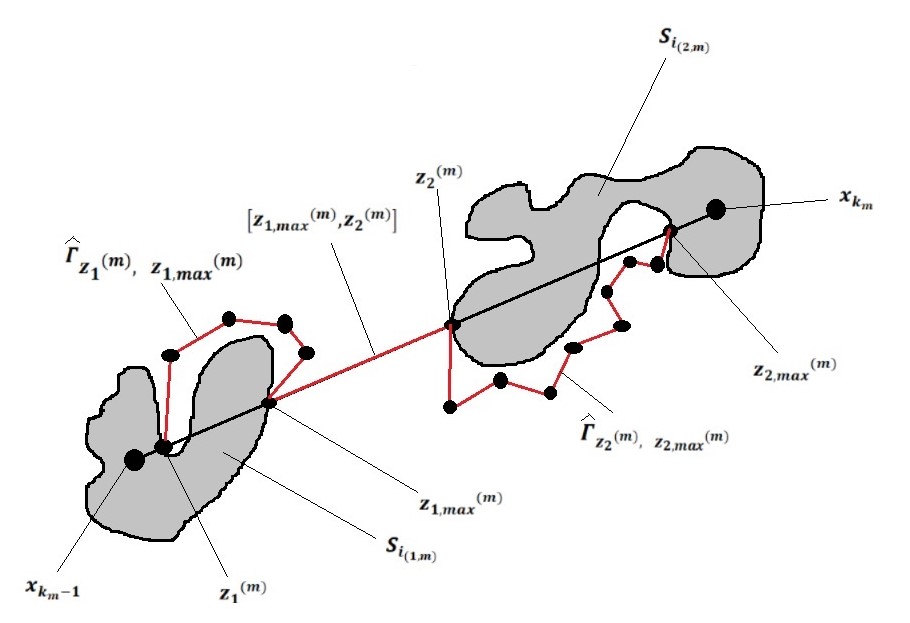}%
		}
		
		\caption{An example in $ \left(\R^d, \norm{\cdot}_2 \right)  $ that represents the Polygonal line  $  \tilde{\Gamma}_{x_{k_m-1},\ x_{k_m}}  $ (with red color) in case where $ x_{k_m-1}\in \bigcup_{i\in I}S_i\  \text{and}\  x_{k_m}\in\bigcup_{i\in I}S_i $.}
	\end{figure}

	\textbf{Case:} $ x_{k_m-1}\in \bigcup_{i\in I}S_i\  \text{and}\  x_{k_m}\in\bigcup_{i\in I}S_i $
	\begin{gather}
			\tilde{\Gamma}_{x_{k_m-1},\ x_{k_m}}:=\begin{cases}
			\hat{\Gamma}_{z^{(m)}_1,\ z^{(m)}_{1,\ \max}},\	& v_0^{(m)}=1\nonumber\\
			\bigcup_{v=1}^{v_0^{(m)}-1}\left(  \hat{\Gamma}_{z^{(m)}_v,\ z^{(m)}_{v,\ \max}}\cup[z^{(m)}_{v,\ \max},\ z^{(m)}_{v+1}]\right) \cup \hat{\Gamma}_{z^{(m)}_{v_0^{(m)}},\ z^{(m)}_{v_0^{(m)},\ \max}},\ & v_0^{(m)}\geq 2\nonumber
			\end{cases}
		\end{gather}

\newpage
	\item \textbf{Construction of Polygonal line:} $ \Gamma^{(1)}_{x_{k_m}}, $   $ \left( x_{k_m}\notin\bigcup_{i\in I}S_i \right)  $\\
	Let $ m\in\Lambda':=\left\lbrace m\in\{1,2,\dots,j_0\}:\ x_{k_m}\notin\bigcup_{i\in I} S_i\right\rbrace \setminus\tilde{\Lambda} $ 
	where 
	\begin{gather}
		\tilde{\Lambda}:=\left\lbrace m\in\{1,2,\dots,j_0\}:\ x_{k_m}\in\bigcup_{i\in I}\partial S_i\ \text{and}\ \{(1-t)x_{k_{m}-1}+ t x_{k_m}:\ t\in[0,1)\}\subset\bigcup_{i\in I}S_i\right\rbrace 
	\end{gather}
	In this case we proceed to the construction of the polygonal line  $ \Gamma^{(1)}_{x_{k_m}}. $
	\begin{claim}
	 Let $ m\in\{1,2,\dots,j_0\} $ such that $ x_{k_m}\notin\bigcup_{i\in I}S_{i}.$ Furthermore, if $ [x_{k_m},\ x_{k_m+1}]\cap\left( \bigcup_{i\in I} S_i \right)\neq\emptyset  $, then $ k_m+1\in J $ therefore, $ k_m+1=k_{m+1}. $
	\end{claim}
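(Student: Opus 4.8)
The plan is to extract from the hypothesis a single index $i_0\in I$ that simultaneously witnesses $k_m+1\in J$, and then to read off $k_m+1=k_{m+1}$ from the increasing enumeration of $J$. (Implicitly we are in the case $k_m\le n-1$, so that $[x_{k_m},x_{k_m+1}]$ is genuinely an edge of the polygonal line $\bigcup_{j=2}^n[x_{j-1},x_j]$.)

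First I would unpack the assumption $[x_{k_m},x_{k_m+1}]\cap\left(\bigcup_{i\in I}S_i\right)\neq\emptyset$: there exist $i_0\in I$ and a point $z\in[x_{k_m},x_{k_m+1}]\cap S_{i_0}$, so in particular $[x_{k_m},x_{k_m+1}]\cap S_{i_0}\neq\emptyset$. By the definition of $J$ it therefore suffices to show $[x_{k_m},x_{k_m+1}]\cap\partial S_{i_0}\neq\emptyset$ for this same $i_0$; this is where the hypothesis $x_{k_m}\notin\bigcup_{i\in I}S_i$ is used. Indeed it gives $x_{k_m}\notin S_{i_0}$, and as $S_{i_0}$ is open (assumption (ii) of Theorem \ref{main_theorem_1}) we have $x_{k_m}\in\partial S_{i_0}\cup ext(S_{i_0})$. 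If $x_{k_m}\in\partial S_{i_0}$ we are done, since $x_{k_m}\in[x_{k_m},x_{k_m+1}]$. If instead $x_{k_m}\in ext(S_{i_0})$, I would parametrise the edge by $\gamma(t):=(1-t)x_{k_m}+t x_{k_m+1}$, $t\in[0,1]$, write $z=\gamma(t_z)$ and $x_{k_m}=\gamma(0)$, and apply Proposition \ref{main_prop} with $K:=S_{i_0}$ --- legitimate since $\partial S_{i_0}\neq\emptyset$ by (ii), $z\in int(S_{i_0})=S_{i_0}$ and $x_{k_m}=\gamma(0)\in ext(S_{i_0})$ --- to obtain a point of $\partial S_{i_0}$ lying on $\gamma([0,t_z])\subset[x_{k_m},x_{k_m+1}]$. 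Equivalently, one can argue by contradiction: if $[x_{k_m},x_{k_m+1}]\cap\partial S_{i_0}=\emptyset$, then the partition $X=int(S_{i_0})\cup\partial S_{i_0}\cup ext(S_{i_0})$ splits the connected segment into the relatively open, disjoint pieces $[x_{k_m},x_{k_m+1}]\cap S_{i_0}\ni z$ and $[x_{k_m},x_{k_m+1}]\cap ext(S_{i_0})$, which forces $[x_{k_m},x_{k_m+1}]\subset S_{i_0}$ and hence $x_{k_m}\in S_{i_0}$, contradicting $x_{k_m}\notin\bigcup_{i\in I}S_i$. Either way $[x_{k_m},x_{k_m+1}]\cap\partial S_{i_0}\neq\emptyset$, so by the definition of $J$ we conclude $k_m+1\in J$.

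Finally I would do the index bookkeeping. Recall $k_1<k_2<\dots$ is the increasing enumeration of $J$. Since $k_m\in J$, $k_m+1\in J$ and $k_m+1>k_m$, the element $k_{m+1}$ is defined (in particular $k_m$ is not the largest element of $J$) and equals $\min\{j\in J:\ j>k_m\}$. As $k_m$ and $k_m+1$ are consecutive integers, no $j\in J$ satisfies $k_m<j<k_m+1$, whence $k_{m+1}=k_m+1$, which is what we wanted.

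The argument is routine once the correct witness $i_0$ has been identified; the one point that needs care is the role of the hypothesis $x_{k_m}\notin\bigcup_{i\in I}S_i$, which is precisely what rules out the possibility that the whole edge $[x_{k_m},x_{k_m+1}]$ is absorbed into a single $S_{i_0}$ --- the situation which, in the earlier proof that $J\neq\emptyset$, had to be circumvented by passing to later edges of the polygonal line. A minor secondary care is to confirm that $k_{m+1}$ is actually defined before asserting the equality.
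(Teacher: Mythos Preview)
Your proof is correct and follows essentially the same route as the paper: pick a witness $i_0$ with $z\in[x_{k_m},x_{k_m+1}]\cap S_{i_0}$, split on whether $x_{k_m}\in\partial S_{i_0}$ or $x_{k_m}\in ext(S_{i_0})$, and in the latter case invoke Proposition~\ref{main_prop} on the subsegment $[x_{k_m},z]$ to produce a boundary point. Your write-up is in fact slightly more complete than the paper's, which stops at $k_m+1\in J$ and never spells out the index bookkeeping $k_{m+1}=k_m+1$; your alternative connectedness argument is also fine (it is essentially the content of Proposition~\ref{main_prop}).
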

   \begin{proof}
	Let $x_{k_m}\notin\bigcup_{i\in I}S_{i}  $ and $ [x_{k_m},\ x_{k_m+1}]\cap\left( \bigcup_{i\in I} S_i \right)\neq\emptyset.   $ Therefore, there exists (unique) $ i^*\in I $ (since $ \bar{S}_i\cap\bar{S}_j=\emptyset,\ \forall i,j\in I,\ i\neq j $) such that $ [x_{k_m},\ x_{k_m+1}]\cap S_{i^*}\neq\emptyset. $ Consequently, there exists $ \tilde{z}\in Int(S_{i^*})\equiv S_{i^*} $ such that $ \tilde{z}\in[x_{k_m},\ x_{k_m+1}]. $ Since $ x_{k_m}\notin S_{i^*}, $ it holds that $ x_{k_m}\in\partial S_{i^*}\cup ext(S_{i^*}) $. We distinguish the following cases for $ x_{k_m} $:
	\begin{itemize}
		\item $ x_{k_m}\in \partial S_{i^*} $\\
		Then $ [x_{k_m},x_{k_m+1}]\cap\partial S_{i^*}\neq\emptyset $
		\item $ x_{k_m}\in ext(S_{i^*}) $\\
		We consider the usual curve $ \gamma_{[x_{k_m},\ x_{k_m+1}]}:[0,\ 1]\to X $ that describes the line seqment with extreme points $ x_{k_m}, x_{k_m+1}, $ (i. e $ x_{k_m}=\gamma_{[x_{k_m},\ x_{k_m+1}]}(0),\ x_{k_m+1}=\gamma_{[x_{k_m},\ x_{k_m+1}]}(1) $ and $ [x_{k_m},\ x_{k_m+1}]=\gamma_{[x_{k_m},\ x_{k_m+1}]}\left( [0,\ 1]\right)  $). Now, we consider the usual curve $ \gamma_{[x_{k_m},\ \tilde{z}]}:[0,\ t_{\tilde{z}}]\to X $ that describes the line seqment with extreme points $ x_{k_m} $ and $ \tilde{z}, $ (i. e  $ x_{k_m}=\gamma_{[x_{k_m},\ \tilde{z}]}(0) $ and $  \tilde{z}=\gamma_{[x_{k_m},\ \tilde{z}]}(t_{\tilde{z}}) $ and $ [x_{k_m},\ \tilde{z}]=\gamma_{[x_{k_m},\ \tilde{z}]}\left( [0,t_{\tilde{z}}]\right)  $. ) Since, $ x_{k_m}\in ext(S_{i^*}),\ \tilde{z}\in Int(S_{i^*}) $ from the Proposition \ref{main_prop} we receive that: 
		\begin{gather}
			\gamma_{[x_{k_m},\ \tilde{z}]}\left([0,t_{\tilde{z}}] \right) \cap\partial S_{i^*}\neq\emptyset
		\end{gather}
	From the fact that $ \gamma_{[x_{k_m},\ \tilde{z}]}\left([0,\ t_{\tilde{z}}] \right) \subset \gamma_{[x_{k_m},\ x_{k_m+1}]}\left([0,1] \right) $ we obtain that $ \gamma_{[x_{k_m},\ x_{k_m+1}]}\left([0,1] \right)\cap\partial S_{i^*}\neq\emptyset.  $ Hence, $ [x_{k_m},\ x_{k_m+1}]\cap\partial S_{i^*}\neq\emptyset. $
	\end{itemize}
From both cases, we obtain that $ k_m+1\in J, $ since $ [x_{k_m},\ x_{k_m+1}]\cap S_{i^*}\neq\emptyset $ and $ [x_{k_m},\ x_{k_m+1}]\cap \partial S_{i^*}\neq\emptyset. $
   \end{proof}
Next, we define:
	\begin{equation}
		\Gamma^{(1)}_{x_{k_m},\ (-)}:=\tilde{\Gamma}_{x_{k_m-1},\ x_{k_m}}\nonumber
		\end{equation}
	and
	\begin{equation}
		\Gamma^{(1)}_{x_{k_m},\ (+)}:=\begin{cases}
			 \bigcup_{\lambda=1}^{n-k_m}[x_{k_m+\lambda-1},\ x_{k_m+\lambda}],\ &
			 \begin{cases}
			 		[x_{k_m},\ x_{k_m+1}]\subset\left(\bigcup_{i\in I}S_i \right)^{c},\nonumber\\
			 		 m=j_0,\ n>k_m\nonumber
			 \end{cases}
		\\
			\left( \bigcup_{\lambda=1}^{k_{m+1}-(k_m+1)}[x_{k_m+\lambda-1},\ x_{k_m+\lambda}]\right)\cup \tilde{\Gamma}_{x_{k_{m+1}-1},\ x_{k_{m+1}}},\ &
			\begin{cases}
				[x_{k_m},\ x_{k_m+1}]\subset\left(\bigcup_{i\in I}S_i \right)^{c},\nonumber\\
				 m<j_0,\ n>k_m\nonumber
			\end{cases}
		\\
			\tilde{\Gamma}_{x_{k_{m+1}-1},\ x_{k_{m+1}}},\ & [x_{k_m},\ x_{k_{m}+1}] \cap\left( \bigcup_{i\in I}S_i\right) \neq\emptyset
		\end{cases}
	\end{equation}
Since $ [x_{k_m},\ x_{k_m+1}]\subset\left(\bigcup_{i\in I}S_i \right)^{c} $, it holds that $ k_{m+1}>k_m+1. $
	Finally, we define the polygonal line $ \Gamma^{(1)}_{x_{k_m}}, $ as follows:
	\begin{equation}
		 \Gamma^{(1)}_{x_{k_m}}:=\Gamma^{(1)}_{x_{k_m},\ (-)}\cup\Gamma^{(1)}_{x_{k_m},\ (+)}
	\end{equation}
	
	\item \textbf{Construction of Polygonal line:} $ \Gamma^{(2)}_{x_{k_m}}, $  $ \left( x_{k_m}\in\bigcup_{i\in I}S_i \right)  $ \\
	Let $ m\in\{1,2,\dots,j_0\} $ such that $ x_{k_m}\in\bigcup_{i\in I}S_i $. Hence, there exists (unique) $ i^*\in I $ such that $x_{k_m}\in S_{i^*}.  $ We notice that $ k_m<n, $ since if it holds that $ k_m=n, $ we obtain $ S_{i^*}\ni x_{k_m}=x_n\equiv y, $ where the last contradicts with the assumption of theorem, $ y\notin\bigcup_{i\in I}S_i. $ In this case we proceed to the construction of the polygonal line  $ \Gamma^{(2)}_{x_{k_m}}. $

	\begin{claim}
		Let $ m\in\{1,2,\dots,j_0\} $ such that $ x_{k_m}\in S_{i^*},$ for some $ i^*\in I $. Then, there exists $ \lambda^*\in\N $ such that $ \left[x_{k_m+\lambda^*-1},\ x_{k_m+\lambda^*} \right]\cap \partial S_{i^*}\neq\emptyset.  $
	\end{claim}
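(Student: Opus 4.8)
The plan is to reuse the mechanism behind the earlier ``$L\neq\emptyset$'' argument: since the polygonal line terminates at $y\equiv x_n$, which by hypothesis avoids every $S_i$ while $x_{k_m}$ sits inside the open set $S_{i^*}$, a separation-of-$X$-by-$S_{i^*}$ argument combined with Proposition \ref{main_prop} forces some edge beyond $x_{k_m}$ to hit $\partial S_{i^*}$. Recall that $k_m<n$ has just been established, so all edges $[x_{k_m+\lambda-1},x_{k_m+\lambda}]$ for $\lambda\in\{1,\dots,n-k_m\}$ are available, the last one being $[x_{n-1},x_n]$.

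First I would observe that $x_{k_m}\in S_{i^*}=int(S_{i^*})$ because $S_{i^*}$ is open, and that $x_n\notin\bigcup_{i\in I}S_i$ gives $x_n\notin S_{i^*}$, hence $x_n\in\partial S_{i^*}\cup ext(S_{i^*})$ from the partition $X=int(S_{i^*})\cup\partial S_{i^*}\cup ext(S_{i^*})$. If $x_n\in\partial S_{i^*}$, then since $x_n\in[x_{n-1},x_n]$ the choice $\lambda^*:=n-k_m$ already gives $[x_{k_m+\lambda^*-1},x_{k_m+\lambda^*}]\cap\partial S_{i^*}\neq\emptyset$. If instead $x_n\in ext(S_{i^*})$, I would build the continuous path $\gamma_{x_{k_m},x_n}:=\oplus_{\lambda=1}^{n-k_m}\gamma_{[x_{k_m+\lambda-1},x_{k_m+\lambda}]}$ joining $x_{k_m}\in int(S_{i^*})$ to $x_n\in ext(S_{i^*})$, and apply Proposition \ref{main_prop} with $K:=S_{i^*}$ to obtain a parameter value sent into $\partial S_{i^*}$. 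Its image is a point of $\bigcup_{\lambda=1}^{n-k_m}[x_{k_m+\lambda-1},x_{k_m+\lambda}]$ lying in $\partial S_{i^*}$, so it belongs to $[x_{k_m+\lambda^*-1},x_{k_m+\lambda^*}]$ for some $\lambda^*\in\{1,\dots,n-k_m\}$, which is exactly the asserted conclusion.

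I do not anticipate a genuine difficulty: the statement is essentially a localized reprise of the earlier argument, and all the topology is already packaged inside Proposition \ref{main_prop}. The only points needing a little care are the index bookkeeping (so that $\lambda^*$ lands in $\{1,\dots,n-k_m\}$ and the terminal edge is correctly $[x_{n-1},x_n]$) and the identification $int(S_{i^*})=S_{i^*}$ coming from openness; the degenerate case $k_m=n$ was already ruled out immediately before the statement, so the path $\gamma_{x_{k_m},x_n}$ is nontrivial.
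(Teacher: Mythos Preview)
Your proposal is correct and follows essentially the same approach as the paper: you split on whether $x_n\in\partial S_{i^*}$ or $x_n\in ext(S_{i^*})$, settle the first case with $\lambda^*=n-k_m$, and in the second case concatenate the edges beyond $x_{k_m}$ into a continuous path so that Proposition~\ref{main_prop} produces a boundary crossing on one of them. Your index bookkeeping is in fact slightly cleaner than the paper's.
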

	\begin{proof}
			  We notice that $ x_{k_m}\in S_{i^*}  $ and $ y\in\partial S_{i^*}\cup ext(S_{i^*}). $ Next, we distinguish the following cases for $ y. $
			\begin{itemize}
				\item[-] $ y=x_n\in\partial S_{i^*} $\\
				Therefore, $ [x_{n-1},x_n] \cap\partial S_{i^*}$ where $ x_{n-1}=x_{k_m+\lambda^*-1} $ for some (unique) $ \lambda^*\in\N $ with $ \lambda^*\geq 3. $
				\item[-] $ y\in ext(S_{i^*}) $\\
				We consider the polygonal line $ \gamma_{x_{k_m},y}:\tilde{I}\to X $ with $ \gamma_{x_{k_m},y}:=\bigcup_{r=1}^{n-k_m}[x_{k_m+r-1},\ x_{k_m+r}] $ that connects the points $ x_{k_m} $ and $ y. $ Since $ \gamma_{x_{k_m},y} $ is a continuous curve and $ x_{k_m}\in S_{i^*},\ y\in ext(S_{i^*})  $, from Proposition \ref{main_prop}, it holds that $ \gamma_{x_{k_m},y}(\tilde{I})\cap\partial S_{i^*}\neq\emptyset. $ Consequently, there exists $ t_0\in \tilde{I}, $ such that $ \gamma_{x_{k_m},y}(t_0)\in[x_{k_m+\lambda-1},\ x_{k_m+\lambda}]\cap\partial S_{i^*} $ for some $ \lambda\in\{1,\dots,n-k_m\}. $
			\end{itemize}
			From both cases is concluded that there exists $ \lambda\in\N $ such that $ [x_{k_m+\lambda-1},\ x_{k_m+\lambda}]\cap\partial S_{i^*}\neq\emptyset. $
		\end{proof}

Next we define: 
\begin{gather}
	\lambda_0^{(m)}:=\max\left\lbrace \lambda\in\left\lbrace 1,2,\dots,n-k_m\right\rbrace:\ \left[ x_{k_m+\lambda-1},\ x_{k_m+\lambda}\right]\cap\partial S_{i^*}\neq\emptyset\right\rbrace\nonumber\\
	t_{\lambda_0^{(m)}}^*:=\min\left\lbrace t\in[0,1]:\ (1-t) x_{k_m+\lambda_0^{(m)}-1}+t\ x_{k_m+\lambda_{0}^{(m)}}\in\partial S_{i^*}\right\rbrace\nonumber\\
	z_{\lambda_{0}^{(m)}}^*:=\left( 1-t_{\lambda_{0}^{(m)}}^*\right)  x_{k_m+\lambda_0^{(m)}-1}+t_{\lambda_0^{(m)}}^*\ x_{k_m+\lambda_0^{(m)}}\nonumber
\end{gather}

In case where $ \lambda_0^{(m)}<n-k_m, $ we define
	\begin{gather}
	H:=\left\lbrace \lambda\in\left\lbrace 1,2,\dots,n-\left( k_m+\lambda_0^{(m)}\right) \right\rbrace:\ k_m+\lambda_0^{(m)}+\lambda\in J\right\rbrace 
	\end{gather}
	
Finally, we define the Polygonal line $ \Gamma^{(2)}_{x_{k_m}}, $ as follows:
\begin{equation}
	\Gamma^{(2)}_{x_{k_m}}:=\begin{cases}
		\hat{\Gamma}_{z^{(m)}_{v_0^{(m)},\ \max},\ z^*_{\lambda_0^{(m)}} }, & k_{m}+\lambda_0^{(m)}\in J\nonumber\\
		
				\hat{\Gamma}_{z^{(m)}_{v_0^{(m)},\ \max},\ z^*_{\lambda_0^{(m)}} }\bigcup \left[z^*_{\lambda_0^{(m)}},\ x_{k_m+\lambda_0^{(m)}} \right], & \begin{cases}
						k_m+\lambda_0^{(m)}\notin J,\ H=\emptyset,\nonumber\\ n=k_m+\lambda_0^{(m)}\nonumber
					\end{cases}
					\\
				\hat{\Gamma}_{z^{(m)}_{v_0^{(m)},\ \max},\ z^*_{\lambda_0^{(m)}} }\bigcup \left[z^*_{\lambda_0^{(m)}},\ x_{k_m+\lambda_0^{(m)}} \right]  \bigcup_{\lambda=1}^{n-(k_m+\lambda_0^{(m)})} \left[x_{k_m+\lambda_0^{(m)}+(\lambda-1)},\ x_{k_m+\lambda_0^{(m)}+\lambda} \right] ,\ & \begin{cases}
						k_m+\lambda_0^{(m)}\notin J,\ H=\emptyset,\nonumber\\ n>k_m+\lambda_0^{(m)}\nonumber
					\end{cases}
				\\
				\hat{\Gamma}_{z^{(m)}_{v_0^{(m)},\ \max},\ z^*_{\lambda_0^{(m)}} }\bigcup \left[z^*_{\lambda_0^{(m)}},\ x_{k_m+\lambda_0^{(m)}} \right]  , & \begin{cases}
					k_{m}+\lambda_0^{(m)}\notin J,\nonumber \\ 
					H\neq\emptyset,\ \min H=1\nonumber
				\end{cases} 
			\\
				\hat{\Gamma}_{z^{(m)}_{v_0^{(m)},\ \max},\ z^*_{\lambda_0^{(m)}} }\bigcup \left[z^*_{\lambda_0^{(m)}},\ x_{k_m+\lambda_0^{(m)}} \right]  \bigcup_{\lambda=1}^{\min H-1} \left[x_{k_m+\lambda_0^{(m)}+(\lambda-1)},\ x_{k_m+\lambda_0^{(m)}+\lambda} \right], & \begin{cases}
				k_{m}+\lambda_0^{(m)}\notin J,\nonumber\\
				 H\neq\emptyset,\ \min H>1,\nonumber
			\end{cases}
		\end{cases}
\end{equation}

\end{itemize}

-----------------------------------------------------------------------------

Finally, we define the polygonal line $ \Gamma_{x,y} $ that connects the points $ x=x_1 $ and $ y=x_n. $
\begin{equation}
	\Gamma_{x,y}:=\begin{cases}
		\Gamma^*,\ & k_1=2\nonumber\\
		\left( \bigcup_{l=2}^{k_1-1}[x_{l-1},x_{l}]\right) \cup\Gamma^*,\ &k_1\geq 3
	\end{cases}
\end{equation}
where $ \Gamma^*:=\bigcup_{l\in\Lambda} \Gamma_{x_{k_l}}$ with 
\begin{equation}
	\Gamma_{x_{k_l}}:=\begin{cases}
		\Gamma^{(1)}_{x_{k_l}},\ & x_{k_l}\notin\bigcup_{i\in I}S_i\nonumber\\
		\Gamma^{(2)}_{x_{k_l}},\ & x_{k_l}\in\bigcup_{i\in I}S_i\nonumber
	\end{cases}
\end{equation}
and $ \Lambda:=\{1,2,\dots,j_0\}\setminus\left( \tilde{\Lambda}\cup\Lambda^*\right),  $
where 
\begin{gather}
	\Lambda^*:=\left\lbrace m\in\{1,2,\dots, j_0\}:\ x_{k_m-1}\in\bigcup_{i\in I}\partial S_i\ \text{and}\ \{(1-t)x_{k_m-1}+t x_{k_m}:\ t\in(0,1]\}\subset\bigcup_{i\in I} S_i \right\rbrace\nonumber.
\end{gather}
	\end{proof}

	%	Next we proceed to the proof of the Theorem \ref{main_theorem_2}.
	%	\subsection*{\textbf{Proof of Theorem \ref{main_theorem_2}}}
	\newpage
	%\section*{Acknowledgments}

	\begin{tabular}{l}
		Savvas Andronicou\\ University of Cyprus \\ Department of Mathematics \& Statistics \\ P.O. Box 20537\\
		Nicosia, CY- 1678 CYPRUS
		\\ {\small \tt andronikou.savvas@ucy.ac.cy}
	\end{tabular}
	%\begin{tabular}{lr}
	%Jill Pipher\\ Brown University \\ Mathematics Department\\ Box 1917 \\
	%Providence, RI 02912  USA
	%\\ {\small \tt jpipher@math.brown.edu}
	%\hfill
	%\end{tabular}
	\begin{tabular}{lr}
		Emmanouil Milakis\\ University of Cyprus \\ Department of Mathematics \& Statistics \\ P.O. Box 20537\\
		Nicosia, CY- 1678 CYPRUS
		\\ {\small \tt emilakis@ucy.ac.cy}
	\end{tabular}
	

\begin{thebibliography}{99999}
	
\bibitem{AM23} Andronicou, S., Milakis, E. Sufficent Conditions for the preservation of Path-Connectedness in an arbitrary metric space. Submitted 2024.
\bibitem{B} Bownik, M. Connectivity and density in the set of framelets. Math. Res. Lett. 14 (2007), no. 2, 285-293
\bibitem{GH}  Guibas, L., Hershberger, J. Optimal shortest path queries in a simple polygon. Computational geometry. J. Comput. System Sci. 39 (1989), no. 2, 126-152.
\bibitem{Klee1} Klee, V. L. Jr. Convex sets in linear spaces. I. Duke Math. 18 (1951) , Issue 2, 443-466.
\bibitem{Klee2} Klee, V. L. Jr. Convex sets in linear spaces. II. Duke Math.  18 (1951) , Issue 4, 875-883.
%\bibitem{Klee3} Klee, V. L. Jr. Convex sets in linear spaces. III. Duke Math. 20 (1951) , Issue 1, 105-111.
\bibitem{McA} McCallum, D.,  Avis, D. A linear algorithm for finding the convex hull of a simple polygon. Inform. Process. Lett. 9 (1979), no. 5, 201-206.
\bibitem{NPK} Ngo, P., Passat, N.,  Kenmochi, Y.  Quasi-Regularity Verification For 2d Polygonal Objects Based On Medial Axis Analysis, 2021 IEEE International Conference on Image Processing (ICIP), Anchorage, AK, USA, 2021, pp. 1584-1588.
\bibitem{SVW} Schaeffer, A., Van Wyk, C. Convex hulls of piecewise-smooth Jordan curves. J. Algorithms 8 (1987), no. 1, 66-94.



%	\bibitem{BBT08} Bruckner, A. M., Thomson, B. S. Classical Real Analysis. (2008), Second Edition
%	\bibitem{C52}Caccioppoli, R. Misura e integratione sugli insiemi dimensionalmente orientati, Rend. Accad. Lincei, Cl. Sc. fis. mat. nat., Sirie VIII, fasc 1, 2 (1952), 3-11, 137-146. 
%	\bibitem{CR76} Caffarelli, L. A., Riviere, N.M. On the rectifiability of domains with finite perimeter Ann. Scuola Norm. Sup. Pisa 3, no 2 (1976), 177-186.
%	\bibitem{DG54} De Giorgi, E. Su una teoria generale dell misura (r-1)-dimensionale in uno spazio ad r dimensioni, Annali do Matematica pura ed applicata, Serie IV, 36 (1954), 191-213. 
%	\bibitem{E89} Engelking, R. (1989). General Topology. Sigma series in pure mathematics. Heldermann, 1989.
%	\bibitem{F99} Folland, G.B. Real Analysis: Modern Techniques and Their Applications. Pure and Applied Mathematics, Wiley, 2nd edition, 1999.

%         \bibitem{LM11} Lemenant, A., Milakis, E. A stability result for nonlinear Neumann problems in Reifenberg flat domains in  $\R^N$  Publ. Mat. 55 (2011), no. 2, 413-432.
%	 \bibitem{LMS13} Lemenant, A., Milakis, E., Spinolo, L. V. Spectral stability estimates for the Dirichlet and Neumann Laplacian in rough domains, J. Funct. Anal. 264 (2013), no. 9, 2097-2135.
%	\bibitem{YY23}Li, Y. Y.,  Yan, X. Anisotropic Caffarelli-Kohn-Nirenberg type inequalities, Advances in Mathematics, Volume 419, 2023, 108958
%	\bibitem{LL22} Lin, F., Lin, Z. Boundary Harnack principle on nodal domains. Sci. China Math. 65 (2022), 2441-2458.
%	\bibitem{T21} Toland, J. F. Path-connectedness in global bifurcation theory, Electron. Res. Arch. 29 (2021), no. 6, 4199-4213.
%	\bibitem{ZHW09} Zhong, R., Huang, N. Wong, M. Connectedness and path-connectedness of solution sets to symmetric vector equilibrium problems. Taiw. Journal of Math. Vol. 13, (2009) No. 2B, pp. 821-836.
	\end{thebibliography}
 \end{document}